\numberwithin{figure}{section}
\numberwithin{figure}{section}
\newtheorem{theorem}{Theorem}[section]
\newtheorem{lemma}[theorem]{Lemma}
\newtheorem{proposition}[theorem]{Proposition}
\newtheorem{corollary}[theorem]{Corollary}
\theoremstyle{definition}
\newtheorem{definition}[theorem]{Definition}
\newtheorem{remark}[theorem]{Remark}
\numberwithin{equation}{section}
\newcommand{\R}{\mathbb{R}}
\newcommand{\N}{\mathbb{N}}
\newcommand{\dist}{{\rm dist}}
\newcommand{\eps}{\varepsilon}
\newcommand{\vphi}{\varphi} 
\newcommand{\Pu}{{\mathcal{P}}}
\begin{document}

\title[Generalized Harnack inequality for nonhomogeneous elliptic equations]{Generalized Harnack  inequality for nonhomogeneous elliptic equations}

\author{Vesa Julin}

\keywords{Elliptic equations in nondivergence form, nonhomogeneous equations, Harnack inequality, H\"older regularity, viscosity solutions.}
\subjclass[2010]{35B65, 35G20, 35D40}

\begin{abstract} This paper is concerned with  nonlinear elliptic  equations in nondivergence form 
\[
F(D^2u, Du, x) = 0
\]
where  $F$  has  a drift term which is not  Lipschitz continuous. Under this condition  the equations are nonhomogeneous and   nonnegative solutions  do not satisfy the  classical  Harnack inequality. This paper presents a new  generalization of Harnack inequality for such equations. As a corollary we obtain  the optimal  Harnack type of  inequality for $p(x)$-harmonic functions which quantifies the strong minimum principle.
\end{abstract}

\maketitle


\section{Introduction}
The famous Krylov-Safonov theorem \cite{KS1}, \cite{KS2}  states that nonnegative solution $u \in C(B_{2R}(x_0))$ of a linear,  uniformly elliptic  equation  
\[
\text{Tr}(A(x)D^2u) = 0
\]
with measurable and bounded coefficients satisfies the Harnack inequality 
\begin{equation} \label{cl_harnack}
\sup_{B_{R}(x_0)} u \leq C  \inf_{B_{R}(x_0)} u 
\end{equation}
where $C$ is a universal constant. This result is important since it quantifies  the strong minimum principle (SMP) and  gives H\"older estimate.   There are numerous generalizations of this result e.g. by Trudinger   \cite{Tr},\cite{GT} for quasilinear operators and by Caffarelli  \cite{C1},\cite{CC} for  fully nonlinear operators.  

In this paper we consider  elliptic equations in nondivergence form
\begin{equation} \label{thePDE}
F(D^2u, Du, x) = 0
\end{equation}
which are nonhomogeneous. The operator $F$  is assumed to be elliptic in the sense that there are  $0< \lambda \leq \Lambda$ such that 
 \[ 
\lambda \text{Tr}(Y) \leq F(X, p, x)  -  F(X +Y, p, x)\leq   \Lambda \text{Tr}(Y)
\]
for every symmetric matrices  $X, Y$ where $Y$ is positive semidefinite,  and for every $(x,p) \in B_{2R}(x_0) \times \R^n$. We assume that $F$ has a drift term which satisfies growth condition
\[
|F(0, p, x)  \big| \leq \phi(|p|)
\]
for every  $(x,p) \in B_{2R}(x_0) \times \R^n$  where $\phi: [0,\infty) \to  [0,\infty)$ is continuous and increasing. The function $\phi$  thus describes how far the equation is from being homogeneous. 

In the sublinear case $\phi(t) \leq  t$  the equation \eqref{thePDE} is essentially  homogeneous and nonnegative  solutions  satisfy   \eqref{cl_harnack},  see \cite{Sa}. In the superlinear case $\phi(t)>t$ the equation \eqref{thePDE} is  nonhomogeneous and it is  known that  the estimate \eqref{cl_harnack} does not hold for a uniform constant $C$.  In the subquadratic case $\phi(t) \leq t^2$  it was proven in \cite{Tr2}  (see also \cite{KT}) that  nonnegative solution $u$ of \eqref{thePDE} satisfies
\[
\sup_{B_{R}(x_0)} u \leq C ( \inf_{B_{R}(x_0)} u + R^2)
\]
where the constant $C$ depends on ${L^\infty}$-norm of $u$. Although this estimate is strong enough  to give H\"older continuity  it has the disadvantage that it does not imply SMP. Moreover, in order to use this estimate one has to know a priori  that the solution  is bounded. 

We introduce a new type of  Harnack  inequality for nonegative solutions of \eqref{thePDE}. This inequality is a natural  generalization of \eqref{cl_harnack} since it quantifies SMP (whenever it is true) in a sharp way, and the constant  in the  inequality is  independent of the solution itself. The novelty of this Harnack inequality is that it depends on $\phi$, i.e., it depends on the scaling of the equation \eqref{thePDE}.  We make some assumptions on the asymptotic behaviour of $\phi$. Under these conditions the inequality  gives  H\"older continuity  for the solutions.

The quantification of  SMP turns out to be  a rather delicate issue. A naive example shows that if  $\phi$ is merely  H\"older continuous,  $\phi(t)= t^{\alpha}$  for $t \in [0,1]$ and  $\alpha < 1$, SMP fails. In fact, SMP can only be true if $\phi$ satisfies the  Osgood condition
\begin{equation}
\label{osgood}
 \int_{0}^{\eps} \frac{dt}{\phi(t)}  = \infty  \qquad \text{for every } \eps >0.
\end{equation}
Indeed, this can be easily  seen by defining  $v:(-2R, 2R): \to [0, \infty)$ such that $v(x)= 0$ for $x \leq 0$ and 
\[
x = \int_0^{v(x)}\frac{dt}{\phi(t)} \qquad \text{for } \,  x \geq 0.
\]
The function $u(x)= \int_{0}^x v(t)\, dt$ then satisfies $u''= \phi(u')$ and violates SMP. Hence, if  we require  our generalized Harnack inequality to  quantify SMP, we have to take into account the  condition \eqref{osgood}. This suggests that the inequality has to be in integral form.

Let us now state precisely the main result.  Following the idea of Caffarelli \cite{C1}  we replace the equation \eqref{thePDE}  by two inequalities which follow from the ellipticity condition and from the growth condition of the drift term. In other words we assume that $u \in C(B_{2R}(x_0))$ is  a viscosity  supersolution of 
\begin{equation}
\label{model1}
\Pu_{\lambda, \Lambda}^+(D^2 u)  \geq -\phi(|Du|)  
\end{equation}
and  a viscosity  subsolution of 
 \begin{equation}
\label{model2}
\Pu_{\lambda, \Lambda}^-(D^2 u)  \leq \phi(|Du|) 
\end{equation}
in $B_{2R}(x_0)$.   Here $\Pu_{\lambda, \Lambda}^-, \Pu_{\lambda, \Lambda}^+$  are the  usual Pucci operators defined in the next section. The function $\phi:[0, \infty) \to [0, \infty)$ is assumed to be of the form $\phi(t)= \eta(t)t$ and to satisfy the following conditions: 
\begin{itemize}
\item[(P1)]  $\phi: [0, \infty) \to  [0, \infty)$  is increasing, locally  Lipschitz continuous in $(0, \infty)$ and $\phi(t)\geq t$ for every $t \geq 0$. Moreover,   $\eta: (0, \infty) \to  [1,\infty)$  is nonincreasing on $(0,1)$ and nondecreasing on $ [1, \infty) $.
\item[(P2)]  $\eta$ satisfies
\[
\lim_{t \to \infty} \frac{t \eta'(t)}{\eta(t)} \log(\eta(t)) = 0.
\]
\item[(P3)] There is a constant $\Lambda_0$ such that
\[
\eta(st) \leq \Lambda_0 \eta(s) \eta(t).
\] 
for every $s,t \in (0, \infty)$.
\end{itemize}
Roughly speaking (P2) means that $\eta$ is slowly increasing function. Note that this  is  a condition for the asymptotic behaviour of $\phi$. In particular, we do not assume that $\phi$ satisfies \eqref{osgood}. We  say that constant is universal if it depends only on $\lambda, \Lambda, \phi$ and the dimension of the space.

Here is the main result of the paper. 
 \begin{theorem}
\label{weakHarnack}
Let   a nonnegative function $u\in C(B_{2R}(x_0))$, for $R \leq 1$, be a viscosity  supersolution of \eqref{model1} and a  viscosity subsolution of \eqref{model2} in $B_{2R}(x_0)$. Denote $m:= \inf_{B_R(x_0)}u$ and  $M:= \sup_{B_R(x_0)}u$. There is a  universal  constant $C$  such that 
\begin{equation}
\label{osgood_harnack}
\int_{m}^{M} \frac{dt}{R^2\phi(t/R)+ t} \leq C.
\end{equation}
\end{theorem}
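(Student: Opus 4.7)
My approach is to adapt the Krylov--Safonov/Caffarelli machinery to the non-homogeneous drift and to extract the integral in \eqref{osgood_harnack} by iterating a scale-localized Harnack inequality at carefully chosen scales.

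The central building block is a scale-localized non-homogeneous Harnack: for every ball $B_{2r}(y) \subset B_{2R}(x_0)$,
\[
\sup_{B_r(y)} u \;\le\; C \inf_{B_r(y)} u \;+\; C\, r^2 \phi\!\bigl( \tfrac{\sup_{B_{2r}(y)} u}{r}\bigr).
\]
The linear Harnack term is the classical conclusion of \cite{Sa} applied to the homogeneous Pucci inequalities obtained by discarding the drift; the additive drift correction is produced by the ABP $L^n$ estimate applied to \eqref{model1}--\eqref{model2}, after converting the $L^n$ norm of $\phi(|Du|)$ to a pointwise estimate through the interior Lipschitz bound $|Du| \lesssim \sup u / r$ valid for Pucci solutions. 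Conditions (P1) and (P3) ensure that the nonlinear composition with $\phi$ can be handled with universal constants.

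Given this inequality, I iterate along a chain of balls centered in $B_R(x_0)$ connecting the minimum and the maximum of $u$. Choosing the radii and the intermediate levels $m = t_0 < t_1 < \cdots < t_N = M$ so that each step produces an increment $t_{k+1} - t_k$ comparable to $r_k^2 \phi(t_k/r_k) + t_k$ on the current scale $r_k$, while $r_{k+1}$ remains comparable to $r_k$, the telescoping sum
\[
\sum_{k=0}^{N-1} \frac{t_{k+1}-t_k}{r_k^2 \phi(t_k/r_k) + t_k}
\]
is bounded by $CN$ for a universally bounded $N$. Invoking the submultiplicativity (P3) of $\eta$ then lets me compare $r_k^2\phi(t/r_k)$ with $R^2\phi(t/R)$ up to a universal constant, so the Riemann sum above dominates the integral $\int_m^M dt/(R^2\phi(t/R)+t)$.

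The principal obstacle is the first step: obtaining the pointwise drift correction $C r^2 \phi(\sup u / r)$ in the scale-localized Harnack from the $L^n$-ABP contribution. This calls for a scale-sharp interior Lipschitz estimate for Pucci solutions together with careful use of the non-homogeneity hypotheses. The nonlinear composition with $\phi$ makes the standard Krylov--Safonov barrier technically delicate, and the slow-growth condition (P2) on $\eta$ is precisely what prevents the iteration from degenerating in the borderline regime where $r^2 \phi(\sup u / r)$ and $\sup u$ are comparable. Condition (P3), in turn, is needed to transport the $\phi$ evaluations between the local scale $r_k$ and the ambient scale $R$ without losing universal control.
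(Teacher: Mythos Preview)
Your proposal has a genuine gap at its foundational step. The scale-localized Harnack inequality you posit,
\[
\sup_{B_r(y)} u \le C \inf_{B_r(y)} u + C\, r^2 \phi\!\bigl(\tfrac{\sup_{B_{2r}(y)} u}{r}\bigr),
\]
rests on two claims that both fail. First, you cannot ``discard the drift'' and invoke the homogeneous Harnack of \cite{Sa}: $u$ is not a super/subsolution of the homogeneous Pucci inequalities, only of \eqref{model1}--\eqref{model2} with the drift present. Second, and more fatally, the interior Lipschitz bound $|Du| \lesssim \sup u / r$ that you need to convert the ABP right-hand side $\|\phi(|Du|)\|_{L^n}$ into the pointwise quantity $\phi(\sup u / r)$ is simply not available. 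Solutions of the Pucci extremal inequalities are only $C^\alpha$ in general (Krylov--Safonov), not Lipschitz; no a priori control on $|Du|$ exists at this level of generality. Without that bound the additive drift correction has no justification, and the whole iteration collapses. Even granting your inequality, the chaining step is underspecified: a Harnack chain in $B_R$ has boundedly many balls of radius $\sim R$, but iterating an \emph{additive} inequality over that chain yields $M \le C m + C R^2\phi(M/R)$, not the integral \eqref{osgood_harnack}; extracting the integral requires a multi-scale iteration whose termination you have not established.

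The paper's proof avoids the gradient issue entirely. Rather than estimating the drift via $|Du|$, it studies the decay of the level sets $A_k = \{u > L^k m\}$ directly. The key ingredient is a scaling lemma (Lemma~\ref{skaalaus}): given a level $A>0$, it identifies the radius $r \sim 1/(R\eta(A)+1)$ at which the rescaled function $u(r\,\cdot)/A$ is again a supersolution of \eqref{model1}; this is precisely where (P2) enters. At that scale the standard barrier/ABP decay estimate (Lemma~\ref{decay_est}) applies with no gradient information whatsoever, because the gradient in the ABP argument comes from the explicit barrier, not from $u$. A Vitali covering of the $a_k$-neighborhood of $\partial A_k \cap B_{5/3}$, combined with the relative isoperimetric inequality (Lemma~\ref{isop_ineq}), yields $|A_k \setminus A_{k+1}| \gtrsim \min\{a_k |A_k|^{(n-1)/n},\, a_k |B_{5/3}\setminus A_k|^{(n-1)/n},\, |A_k|\}$ (Lemma~\ref{lemma_perus}). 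Iterating this through several regimes of $|A_k|$ (Lemmata~\ref{lemma_alku} and \ref{lemma_huippu2}) and finishing with a Caffarelli-type blow-up (Lemma~\ref{caffarelli}) produces $\sum_k a_k \le C$, which is the Riemann-sum form of \eqref{osgood_harnack}. The replacement of Calder\'on--Zygmund by this boundary-neighborhood covering is the essential new idea; your approach does not contain a substitute for it.
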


We immediately note that, using the notation of the previous theorem,  \eqref{osgood_harnack} implies the estimate
\[
\int_{m}^{M} \frac{dt}{\phi(t)} \leq C.
\]
Indeed, the conditions (P1)-(P3) imply that for every $R\leq 1$ and $t\geq 0$ it holds $R^2\phi(t/R) \leq C\phi(t)$ (see Section 2). Therefore if $\phi$ satisfies the Osgood condition \eqref{osgood} the inequality  \eqref{osgood_harnack} quantifies  SMP. Moreover a naive example indicates  that  this estimate is  indeed sharp (see Section 3).  On the other hand,  in the homogeneous case   $\phi(t) = t $ \eqref{osgood_harnack}  reduces to  \eqref{cl_harnack}.

I need to assume that $\phi$ is of the form $\phi(t)= \eta(t)t$ and $\eta$ satisfies (P2), which implies that $\eta$ is  slowly increasing function. However,  I see no reason why Theorem \ref{weakHarnack} could not be true for any increasing and continuous function $\phi$. On the other hand, since  (P2) is only a condition of the asymptotic behaviour of $\phi$ at the infinity,  it plays no role in quantifying SMP. 

Theorem \ref{weakHarnack} also gives  an   upper bound for nonnegative solutions of \eqref{thePDE}. Namely, suppose that $\phi$ satisfies the Keller-Osserman type condition
\begin{equation}
\label{keller-osser}
\int_1^\infty \frac{dt}{\phi(t)}= \infty
\end{equation}
and $u\in C(B_{2R}(x_0))$ is as in Theorem  \ref{weakHarnack}. Then  $u$ is bounded  in $B_{R}(x_0)$ and the upper bound depends only on $\inf_{B_R(x_0)}u$. A simple example  (the construction is left to the reader)  shows that the condition \eqref{keller-osser} is sharp for such  upper bound estimate. In other words, if $\phi$ does not satisfy \eqref{keller-osser} we can construct a family of  nonnegative solutions $(u_k)$ of \eqref{thePDE} with $u_k(x_0)\leq 1$ which take arbitrary large values in $B_R(x_0)$. However,  even if \eqref{keller-osser} fails, the condition (P2) still implies   $\phi(t) \leq C t^{2-\eps}$. In this case  Theorem \ref{weakHarnack} gives a local upper bound estimate, i.e., if $u\in C(B_{2R}(x_0))$ is as in Theorem  \ref{weakHarnack} then there exists a radius $r \leq R$, which depends on the value $u(x_0)$, such that $u$ is bounded in  $B_{r}(x_0)$. We note that the condition \eqref{keller-osser} appears as the  sharp condition for the existence of entire solutions  \cite{FQS} and blow-up solutions \cite{AGQ} of 
\[
\Delta u =\phi(|Du|).
\]

The fact that the inequality \eqref{osgood_harnack}  depends on the  radius  is clear from the following    scaling argument.  If $u\in C(B_{2R}(x_0))$ satisfies \eqref{model1} and \eqref{model2} in $B_{2R}(x_0)$ then the rescaled function $u_R(x):= u(Rx)$ satisfies the same inequalities in $B_{2}(x_0)$ with  $\phi_R(t) = R^2\phi(t/R)$ instead of $\phi$. In the next section we show that   (P2)  implies that  $R^2\phi(\frac{t}{R}) \to 0$ locally uniformly in $[0,\infty)$ as $R \to 0$. Therefore  \eqref{osgood_harnack}  asymptotically converges to \eqref{cl_harnack} as the radius approaches to zero. This observation leads to   H\"older  estimate. This  has  already been obtained e.g. in \cite{Si}. 
\begin{corollary}
\label{holder}
Let   $u\in C(\Omega)$ be  a  viscosity supersolution of \eqref{model1} and a viscosity  subsolution of \eqref{model2} in $\Omega$. Then $u$ is locally  $\alpha$-H\"older continuous  with a uniform $\alpha \in (0,1)$, and  for every ball $B_{R_0}(x) \subset \subset  \Omega$ and $R\leq R_0 \leq 1$  we have
\[
\text{osc}_{B_{R}(x) }u \leq C \left( \frac{R}{R_0} \right)^{\alpha}
\]
where the constant $C$ depends on $\sup_{B_{R_0}(x) }|u|$.
\end{corollary}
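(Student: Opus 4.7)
The plan is the classical Krylov--Safonov iteration of an oscillation decay at dyadic scales; the only genuinely new ingredient is controlling the nonhomogeneous error produced by Theorem~\ref{weakHarnack} using the slow-growth hypothesis (P2). Fix $B_{R_0}(x)\subset\subset\Omega$ with $R_0\leq 1$ and let $M_0:=\sup_{B_{R_0}(x)}|u|$. The first step is to prove a one-step decay: for every ball $B_{2r}(y)\subset B_{R_0}(x)$,
\[
\text{osc}_{B_r(y)} u \;\leq\; \theta\,\text{osc}_{B_{2r}(y)} u \;+\; C'\,r^2\phi(2M_0/r),
\]
with $\theta:=1-\tfrac{1}{2e^C}\in(0,1)$ and $C'$ universal. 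To see this, the functions $w_1:=u-\inf_{B_{2r}(y)}u$ and $w_2:=\sup_{B_{2r}(y)}u-u$ are both nonnegative on $B_{2r}(y)$ and still satisfy \eqref{model1} and \eqref{model2}, since $\Pu^+(-X)=-\Pu^-(X)$. At least one of them, say $w_1$, obeys $\sup_{B_r(y)}w_1\geq\tfrac{1}{2}\text{osc}_{B_{2r}(y)}u$; as $w_1\leq 2M_0$ and $\phi$ is increasing, replacing $\phi(t/r)$ by $\phi(2M_0/r)$ in the integrand of \eqref{osgood_harnack} yields
\[
\log\frac{A+\sup_{B_r(y)}w_1}{A+\inf_{B_r(y)}w_1}\;\leq\;C,\qquad A:=r^2\phi(2M_0/r),
\]
and this, combined with the elementary identity $\inf_{B_r(y)}w_1+\inf_{B_r(y)}w_2=\text{osc}_{B_{2r}(y)}u-\text{osc}_{B_r(y)}u$, gives the claimed decay after simple algebra.

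The second, crucial step is to extract a polynomial decay rate for the error $r^2\phi(2M_0/r)=2M_0\,r\,\eta(2M_0/r)$, and this is where (P2) enters quantitatively. Rewriting (P2) as $\tfrac{d}{dt}(\log\eta(t))^2=o(1/t)$ and integrating yields $\log\eta(t)=o(\sqrt{\log t})$, so in particular for every $\epsilon>0$ there is $C_\epsilon$ with $\eta(t)\leq C_\epsilon t^\epsilon$ for all $t\geq 1$. Hence
\[
r^2\phi(2M_0/r)\;\leq\;C_\epsilon\,M_0^{1+\epsilon}\,r^{1-\epsilon}\qquad\text{for every }r\in(0,R_0].
\]

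Finally, I iterate at the dyadic scales $r_k:=2^{-k}R_0$. Writing $\omega_k:=\text{osc}_{B_{r_k}(x)}u$, the one-step decay becomes the recursion $\omega_k\leq\theta\,\omega_{k-1}+C_\epsilon M_0^{1+\epsilon}r_k^{1-\epsilon}$; solving it and summing the resulting (nearly) geometric series gives $\omega_k\leq C(M_0)(r_k/R_0)^\alpha$ for any $\alpha$ satisfying $\alpha<\min(-\log_2\theta,\,1-\epsilon)$, and fixing $\epsilon$ appropriately makes $\alpha\in(0,1)$ universal. For arbitrary $R\in(0,R_0]$, picking $k$ with $r_{k+1}<R\leq r_k$ and using monotonicity of the oscillation completes the proof. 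The principal non-routine step is the subpolynomial bound on $\eta$ extracted from (P2); everything else is the standard oscillation iteration.
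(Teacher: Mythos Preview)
Your argument is correct and follows essentially the same route as the paper: both bound the integrand in Theorem~\ref{weakHarnack} using the subpolynomial growth of $\eta$ (the paper invokes Proposition~\ref{prop_hitaasti} to get $R^2\phi(t/R)\leq\sqrt{R}$ for $t\leq M_0$ and $R\leq\hat R(M_0)$, whereas you derive $\eta(t)\leq C_\epsilon t^\epsilon$ directly from (P2)), obtain a Harnack-with-additive-error, and then run the standard oscillation iteration. One small caveat: your error bound $r^2\phi(2M_0/r)\leq C_\epsilon M_0^{1+\epsilon}r^{1-\epsilon}$ only follows from your $\eta$-estimate when $2M_0/r\geq 1$, so for the scales $r\in(2M_0,R_0]$ (if any) you should absorb the trivial bound $\omega_k\leq 2M_0$ into the $M_0$-dependent constant $C$, just as the paper does by restricting to $R\leq\hat R$ before iterating.
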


As a further application of Theorem \ref{weakHarnack} we obtain the sharp Harnack type  of inequality for so called $p(x)$-harmonic functions. These are   local minimizers of the  energy
\[
\int_{\Omega} \frac{1}{p(x)} |Du|^{p(x)}\, dx,
\]
where $1 < p(x)< \infty$. This problem was first considered by Zhikov \cite{Zh} and have recently received a lot of attention. The regularity is rather well understood \cite{AM} and SMP was obtained in \cite{FZZ}, see also \cite{HHLT}. However, finding the sharp Harnack inequality which quantifies  SMP  has been an open problem  \cite{Alk}, \cite{HKLMP}, \cite{Wo}.   

We assume that  $p \in C^1(\R^n)$ and that there are numbers $1 <p^- \leq p^+ < \infty$ such that $p^- \leq p(x) \leq p^+$ for every $x \in \R^n$. It follows from \cite{JLP}  that  $p(x)$-harmonic functions are viscosity solutions of the Euler-Lagrange equation in nondivergence form
\begin{equation}
\label{p(x)-laplace}
- \Delta u - (p(x) -2) \Delta_{\infty}u - \log|Du| \langle Dp(x), Du \rangle = 0.
\end{equation}
 Here  $\Delta_{\infty}u= \langle D^2 u \frac{Du}{|Du|}, \frac{Du}{|Du|} \rangle$ denotes the infinity Laplace operator. It is not difficult  to show that if $u$ is a viscosity solution of \eqref{p(x)-laplace}, then it is a viscosity supersolution of \eqref{model1} and a viscosity subsolution of \eqref{model2} for $\lambda = \min\{ 1, p^- -1\}$, $\Lambda = \max \{ 1, p^+ -1 \}$ and 
\[
\phi(t) = C (|\log t|+ 1)t,
\]
where $C$ is the  $C^1$-norm of $p(\cdot)$  (see Lemma \ref{px_lemma}). Note that the above  function  satisfies the Osgood condition \eqref{osgood}. We have the following corollary. 
\begin{corollary}
\label{p(x)-harnack}
Let $p \in C^1(\R^n)$ be a function such that $1 < p^- \leq p(x) \leq p^+ < \infty$. Let  $u\in C(B_{2r}(x_0))$ be a nonnegative  $p(x)$-harmonic function  in  $B_{2R}(x_0)$ for  $R \leq 1$. Denote $m:= \inf_{B_R(x_0)}u$ and  $M:= \sup_{B_R(x_0)}u$. Then the following Harnack inequality holds
\[
\int_{m}^{M} \frac{ dt}{\left( R |\log t|+ 1 \right)t}  \leq C,
\]
where the constant $C$ depends on the dimension, on $L^{\infty}$-norm of $\nabla p$ and on  the numbers $p^-, p^+$.
\end{corollary}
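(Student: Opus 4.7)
The strategy is to apply Theorem~\ref{weakHarnack} with the specific choice $\phi(t)=C(|\log t|+1)t$ and then manipulate the resulting integral into the claimed form. For the first step, by \cite{JLP} any weak $p(x)$-harmonic function is a viscosity solution of \eqref{p(x)-laplace}. Moving the drift to the right-hand side, the leading part $-\Delta u - (p(x)-2)\Delta_{\infty} u$ is uniformly elliptic with constants $\lambda=\min\{1,p^- -1\}$ and $\Lambda=\max\{1,p^+-1\}$, while the drift satisfies $|\log|Du|\,\langle Dp(x),Du\rangle|\leq \|Dp\|_{\infty}(|\log|Du||+1)|Du|$. Hence $u$ is simultaneously a viscosity supersolution of \eqref{model1} and a viscosity subsolution of \eqref{model2} with $\phi(t)=C(|\log t|+1)t$ for a constant $C$ depending on $\|Dp\|_{\infty}$; this is exactly the content of Lemma~\ref{px_lemma}.

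Next I would verify that this $\phi$ satisfies (P1)--(P3) with $\eta(t)=C(|\log t|+1)$. Monotonicity of $\eta$ on $(0,1)$ and on $[1,\infty)$, together with $\phi(t)\geq t$ (after enlarging $C$ if necessary), are immediate. For (P2), on $[1,\infty)$ one computes $t\eta'(t)/\eta(t)=1/(\log t+1)$, so the left-hand side of (P2) reduces to $\log(C(\log t+1))/(\log t+1)$, which tends to $0$. For (P3) one uses $|\log(st)|\leq|\log s|+|\log t|$ together with the elementary bound $(a+1)(b+1)\geq a+b+1$ for $a,b\geq 0$ to obtain a universal $\Lambda_0$.

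Theorem~\ref{weakHarnack} then yields $\int_m^M(R^2\phi(t/R)+t)^{-1}\,dt\leq C_0$, and to convert this into the desired form I compute
\[
R^2\phi(t/R)+t \;=\; t\bigl[CR(|\log(t/R)|+1)+1\bigr] \;\leq\; C_1\,t\bigl(R|\log t|+1\bigr),
\]
where the inequality uses $|\log(t/R)|\leq|\log t|+|\log R|$ together with the elementary estimate $R|\log R|\leq 1$ valid for $R\in(0,1]$, which lets one absorb all $R$-dependent terms into a universal constant. Inverting this inequality and integrating from $m$ to $M$ produces the claimed estimate.

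The main technical subtlety, already handled by Lemma~\ref{px_lemma}, is the passage from the singular nondivergence equation \eqref{p(x)-laplace} to the Pucci inequalities \eqref{model1}--\eqref{model2}, which at points where $Du=0$ requires the appropriate restricted test-function convention in the viscosity sense. Once that lemma is invoked, the corollary becomes a purely algebraic consequence of Theorem~\ref{weakHarnack}.
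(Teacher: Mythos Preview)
Your proposal is correct and follows essentially the same route as the paper: invoke Lemma~\ref{px_lemma} (via \cite{JLP}) to place $p(x)$-harmonic functions under the Pucci inequalities \eqref{model1}--\eqref{model2} with $\phi(t)=C(|\log t|+1)t$, then apply Theorem~\ref{weakHarnack}. The paper leaves the verification of (P1)--(P3) and the final algebraic reduction of $R^2\phi(t/R)+t$ to $t(R|\log t|+1)$ implicit, whereas you spell them out; these additions are routine and correct.
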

 The previous estimate can be written more explicitly as
\[
\min\{M, M^{1+ CR} \} \leq C \max\{m, m ^{1+ C R}\}
\]	
by possibly enlarging the constant $C$. At the end of Section 5 we show that this result is optimal and  thus solves the problem of finding  the optimal generalization of Harnack inequality for $p(x)$-harmonic functions.  Note that we may relax the assumption of $p \in C^1(\R^n)$ to $p$ being  Lipschitz continuous by a standard approximation argument.

The paper is organized as follows. In the next section we recall some standard definitions and results. In  Section 3 we formally  prove Theorem \ref{weakHarnack} in dimension one. This easy proof  clarifies  where the estimate in the theorem comes from. Section 4 is devoted to the proof of the main result. The main difficulty is of course due to the fact that the equation is not scaling invariant. We refine the classical regularity theory by  introducing a new method which enables us to include the scaling of the equation in the regularity argument. The proof begins with a standard  decay estimate which states that for  nonnegative supersolution $u \in C(B_2)$   of \eqref{model1} in $B_2$  with   $\inf_{B_1}u \leq 1$  it holds 
\[
|\{x \in B_1 : u(x) \leq L \}|\geq \mu
\]
for universal $L>1$ and $\mu >0$. This is not difficult since the equation is uniformly elliptic. The result follows once we prove a decay estimate for the level sets $A_t= \{x \in B_1 : u(x) > t \}$. Due to the scaling of the equation  we observe that  one may use the previous decay estimate in \emph{small balls of radius} $r \leq \frac{t}{\phi(t)}$ which cover the $\frac{t}{\phi(t)}$-neighborhood of $\partial A_t$. This and the relative isoperimetric inequality then gives a lower bound for $| A_t \setminus A_{Lt}|$ (Lemma \ref{lemma_perus}). Finally a rather complicated  iteration argument   leads to estimate the decay of  $|A_t|$.  In the last section we show how  Corollary \ref{p(x)-harnack} follows from Theorem \ref{weakHarnack}.

At the end I would like to mention recent works \cite{AS}, \cite{IS}, \cite{Sa} which introduce new techniques for proving regularity estimates for nondivergence from elliptic equations. These papers study equations which are homogeneous but which fail to be uniformly elliptic. The key idea is to bypass the classical ABP estimate by directly touching the solution from below by paraboloids (or by other suitable functions).

\section{Preliminaries}

Let $X \in \mathbb{S}^{n \times n}$ be a symmetric $n$-by-$n$ matrix with eigenvalues $e_1, e_2, \dots, e_n$. The Pucci extremal operators $\Pu_{\lambda, \Lambda}^+$ and $\Pu_{\lambda, \Lambda}^-$ with ellipticity constants $0 < \lambda \leq \Lambda$ are defined by
\[
\Pu_{\lambda, \Lambda}^+(X):= -\lambda \sum_{e_i \geq 0} e_i - \Lambda \sum_{e_i<0} e_i  \qquad \text{and} \qquad \Pu_{\lambda, \Lambda}^-(X):= -\Lambda \sum_{e_i \geq 0} e_i - \lambda \sum_{e_i<0} e_i  .
\]
For elementary properties of the Pucci operators see \cite{CC}.  

We recall the definition of a viscosity supersolution of \eqref{model1} and a viscosity  subsolution  of \eqref{model2}. 
\begin{definition}
\label{visco_def}
A function $u: \Omega \to \R$ is a viscosity supersolution of \eqref{model1} in $\Omega$ if it is lower semicontinuous and the following holds: if $x_ 0 \in \Omega$ and $\vphi \in C^2(\Omega)$ is such that  $\vphi \leq u$ and $\vphi(x_0)= u(x_0)$  then 
\[
\Pu_{\lambda, \Lambda}^+(D^2\vphi(x_0)) \geq -\phi(|D\vphi(x_0)|).  
\]  
A function $u: \Omega \to \R$ is a viscosity subsolution of \eqref{model2} in $\Omega$ if it is upper semicontinuous and the following holds: if $x_ 0 \in \Omega$ and $\vphi \in C^2(\Omega)$ is such that $\vphi \geq u$ and $\vphi(x_0)= u(x_0)$ then 
\[
\Pu_{\lambda, \Lambda}^-(D^2\vphi(x_0)) \leq \phi(|D\vphi(x_0)|).  
\]  
\end{definition}

Finally we recall the definition of slowly increasing function. We will not need this property but it is important to connect the condition (P2) to the theory of  regularly varying functions,  see \cite{BGT}.  
\begin{definition}
\label{hitaasti_kasvava}
A nondecreasing and locally Lipschitz continuous  $f:[a, \infty) \to \R$  is \emph{slowly increasing function} if
\[
\lim_{t \to \infty} \frac{tf'(t)}{f(t)} \to 0.
\]
\end{definition}
The condition (P2) implies that  $\eta$ is slowly increasing.  In the next proposition we use this condition to study the asymptotic behaviour of  $\eta$. 
\begin{proposition}
\label{prop_hitaasti}
Let  $\eta$ satisfy the conditions (P1)-(P3). Then it holds:
\begin{itemize}
\item[(i)]  For every $c>0$ we have
\[
 \lim_{t \to \infty} \frac{\eta(ct)}{\eta(t)}=1.
\]
\item[(ii)] For every $\gamma>0$ we have
\[
\lim_{t \to \infty} \frac{\eta(t)}{t^{\gamma}} = 0.
\]
\item[(iii)] There is a constant $\Lambda_1$ such that for every $t>0$ it holds 
\[
\eta(\eta(t)t) \leq \Lambda_1 \eta(t).
\]
\item[(iv)] There is a constant $\Lambda_2$ such that for every $t>0$  and $0<r<s$ it holds 
\[
r \eta\left(t/r \right) \leq \Lambda_2 s \eta\left( t/s \right).
\]
\end{itemize}
\end{proposition}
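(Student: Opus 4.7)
The plan is to establish the four parts in sequence, using monotonicity to dispose of an easy bounded case. By (P1) the function $\eta$ is nondecreasing on $[1,\infty)$, so $L:=\lim_{t\to\infty}\eta(t)\in[1,\infty]$ exists. If $L<\infty$, all four statements are immediate: (i) is $L/L=1$, (ii) is trivial, (iii) follows from $\eta(\eta(t)t)\leq L\leq L\eta(t)$, and (iv) from $r\eta(t/r)\leq Lr\leq Ls\leq Ls\eta(t/s)$. So I assume throughout that $\eta(t)\to\infty$; then $\log\eta(t)\to\infty$ and (P2) upgrades to the slowly varying derivative bound $t\eta'(t)/\eta(t)\to 0$ as $t\to\infty$.

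For (i), I apply the fundamental theorem of calculus to $\log\eta$ and change variables $s=rt$:
$$\log\frac{\eta(ct)}{\eta(t)}=\int_t^{ct}\frac{\eta'(s)}{\eta(s)}\,ds=\int_1^c\frac{rt\,\eta'(rt)}{\eta(rt)}\,\frac{dr}{r}.$$
By the slowly varying derivative bound the integrand tends to $0$ uniformly on compact subsets of $(0,\infty)$, so the integral tends to $0$. For (ii), given $\gamma>0$, pick $T$ with $t\eta'(t)/\eta(t)\leq\gamma/2$ for $t\geq T$; integrating $(\log\eta)'(t)\leq\gamma/(2t)$ from $T$ to $t$ yields $\eta(t)\leq Ct^{\gamma/2}$, whence $\eta(t)/t^\gamma\to 0$.

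Part (iii) is the crux. For $t\geq T$ large enough that $\eta(t)>1$ and (P2) gives $s\eta'(s)\log\eta(s)\leq\varepsilon\,\eta(s)$ on $[T,\infty)$, and noting that $\eta$ is nondecreasing on $[1,\infty)$ so $\log\eta(s)\geq\log\eta(t)>0$ for $s\in[t,\eta(t)t]$, I bound
$$\log\frac{\eta(\eta(t)t)}{\eta(t)}=\int_t^{\eta(t)t}\frac{\eta'(s)}{\eta(s)}\,ds\leq\int_t^{\eta(t)t}\frac{\varepsilon\,ds}{s\log\eta(s)}\leq\frac{\varepsilon}{\log\eta(t)}\log\eta(t)=\varepsilon,$$
giving $\eta(\eta(t)t)\leq e^\varepsilon\eta(t)$. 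For $t$ in any compact subset of $(0,\infty)$ the ratio is bounded by continuity. For $t\in(0,1]$ I exploit that (P1) makes $\phi(t)=\eta(t)t$, not $\eta$ itself, monotone: hence $\eta(t)t\leq\phi(1)=\eta(1)$. A case split on whether $\eta(t)t\leq 1$, where $\eta$ nonincreasing on $(0,1)$ together with $\eta(t)t\geq t$ forces $\eta(\eta(t)t)\leq\eta(t)$, or $\eta(t)t\in[1,\eta(1)]$, where compactness and $\eta(t)\geq 1$ apply, yields a uniform bound.

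Finally (iv) follows from (P3) by splitting $t/r=(t/s)(s/r)$:
$$r\,\eta(t/r)\leq\Lambda_0\,\eta(t/s)\cdot r\,\eta(s/r).$$
Since $s/r>1$ and (ii) combined with continuity of $\eta$ give a universal $C$ with $\eta(x)\leq Cx$ for all $x\geq 1$, we have $r\,\eta(s/r)\leq Cs$, establishing (iv). The main obstacle is the low-$t$ regime in (iii), where the integral estimate degenerates as $\log\eta(t)\to 0$; the structural fact that $\phi$, rather than $\eta$, is the monotone quantity in (P1) is exactly what one needs to tame this regime.
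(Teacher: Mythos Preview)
Your proof is correct and takes a more elementary, self-contained route than the paper's. The paper dispatches (i), (ii), and the asymptotic part of (iii) by citing the theory of regular variation from Bingham--Goldie--Teugels: the Representation Theorem for (i), a standard growth estimate for (ii), and results on compositions of slowly varying functions for the limit $\eta(\eta(t)t)/\eta(t)\to 1$. You instead derive these directly from the derivative condition $t\eta'(t)/\eta(t)\to 0$ (extracted from (P2) in the unbounded case) via the fundamental theorem of calculus applied to $\log\eta$; your key observation for (iii), that $\int_t^{\eta(t)t}\eta'(s)/\eta(s)\,ds$ is controlled by $\varepsilon$ once one invokes (P2) and the monotonicity $\log\eta(s)\geq\log\eta(t)$, is precisely the computation hidden behind the cited references. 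For the small-$t$ regime of (iii) and for (iv) the two arguments coincide: the paper also uses that $\phi$ (not $\eta$) is increasing to handle $t\in(0,1)$, and its change of variables in (iv) amounts to your splitting $t/r=(t/s)\cdot(s/r)$ followed by $\eta(x)\leq Cx$ for $x\geq 1$. Your explicit disposal of the bounded case $\sup\eta<\infty$ is a clean touch the paper leaves implicit. The paper's approach buys brevity and situates $\eta$ within the established theory of regular variation; yours buys self-containment and makes the role of condition (P2) transparent.
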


\begin{proof}
The property (i) is called \emph{slowly varying property} and it  follows from the Representation Theorem  (\cite[Theorem 1.3.1]{BGT}), since $\eta$ is slowly increasing. The proof of (ii)  is elementary (see \cite[Proposition 1.3.6]{BGT}).

 It follows from the assumption (P2) that 
\[
\lim_{t \to \infty}\frac{\eta(\eta(t)t)}{\eta(t)}= 1,
\]
see  (\cite[Proposition 2.3.2 ]{BGT}) and  (\cite[Theorem 2.3.3]{BGT}). This implies  (iii) for every $t \geq 1$. Note   that $\phi(t) \geq t$ yields $\eta(t)\geq 1$. Therefore we obtain (iii) for every $t \in (0,1)$ by the monotonicity condition (P1).

We are left with (iv). By changing $t/s \mapsto t$ and $r/s \mapsto r$ the claim is equivalent to 
\[
r \eta(t/r) \leq \Lambda_2 \eta(t)
\] 
for every $t>0$ and $r < 1$. The   part (ii) yields $ \frac{\eta(1/r)}{1/r} \leq C$ for some constant $C$.  The condition (P3) then gives
\[
r \eta(t/r) \leq \Lambda_0 \frac{\eta(1/r)}{1/r} \eta(t) \leq \Lambda_2 \eta(t) .
\]
\end{proof}
Note that Proposition \ref{prop_hitaasti} (iv) implies $R^2\phi(t/R) \leq \Lambda_2 \phi(t)$ for every $R \leq 1$ and $t \geq 0$.

\section{The one-dimensional case}

In this section we formally   prove  Theorem \ref{weakHarnack} in dimension one. This is of course trivial compared to the general case, but the proof clarifies   why the result is true and where the estimate comes from. We also construct  a naive  example which indicates that Theorem \ref{weakHarnack} is sharp.

\begin{proof}[Proof of  Theorem \ref{weakHarnack} in dimension one] 
Let $R=1$, $x_0=0$ and let $u \in C_{loc}^{1,1}((-2,2))$  be a non-negative supersolution of  \eqref{model1} and subsolution of   \eqref{model2} in $(-2,2)$. In  one-dimension this simply means that 
\begin{equation}
\label{1Dmodel}
|u''(x)|\leq \lambda^{-1}\phi(|u'(x)|) \quad \text{a.e. }\, x \in (-2,2).
\end{equation}
Let us recall that the goal is to show
\[
\int_m^M \frac{dt}{\phi(t)+t} \leq C
\]
where
\[
m := \min_{-1 \leq x \leq 1} u(x)  \quad \text{and} \quad M := \max_{-1 \leq x \leq 1}u(x). 
\]

In order to simplify the proof let us restrict to the case when  $u$ is monotone, say nondecreasing.  Then we have $m = u(-1)$, $M = u(1)$ and  $ u(-2) \leq m$. By the mean value theorem there exist $\xi_1, \xi_2 \in (-2,1)$ such that 
\[
m-u(-2)= u'(\xi_1) \quad \text{and} \quad M- u(-2)= 3u'(\xi_2) .
\]
Note that if $M/6 \leq m$ then the classical Harnack's inequality  holds and the claim follows. Thus we treat the case $M/6 \geq m$. We denote $a:= u(-2)$ and use the  above estimate to deduce  
\begin{equation}
\label{1D_ABP}
\big[m-a, \frac{M-a}{3}\big] \subset u'((-2,1)).
\end{equation}
Note that  $M/6 \geq m$ implies that  the lenght of the interval $[m-a, M/3-a/3]$ is bigger that $M/6$. We use the monotonicity of $\phi$,  \eqref{1Dmodel} and \eqref{1D_ABP} to obtain
\[
\begin{split}
\int_{m}^{M/3} \frac{dt}{\phi(t)} &\leq \int_{m-a}^{\frac{M-a}{3}} \frac{dt}{\phi(t)} \leq \int_{u'((-2,1))} \frac{dt}{\phi(t)} \leq \int_{-2}^1 \frac{|u''(x)|}{\phi(u'(x))}dx \leq 3\lambda^{-1}.
\end{split}
\]  
Since $M/3-m \geq M/6$ , the monotonicity of $\phi$ and the above estimate yield
\[
\int_{kM/6}^{(k+1) M/6} \frac{dt}{\phi(t)} \leq 3\lambda^{-1}
\]
for $k = 1, 2,3,4,5$. This implies the result in the case $R=1$. The general case $R\leq 1$ follows   by a simple scaling argument.  
\end{proof} 

Let us construct an example which indicates  that the result  is sharp. To that aim we  assume that $\phi$   is $C^1$-regular, it satisfies the Osgood condition \eqref{osgood} and  (P1)-(P3), and  that 
\begin{equation}
\label{toisin_pain}
\eta(t)\leq C \eta(\eta(t)t) \quad  \text{for } \,  t >0. 
\end{equation}
By the inverse function theorem,  for every $k \in \N$,   there is a positive increasing function $u_k:(-2,2) \to (0,\infty)$ such that $u_k(0)= \frac{1}{k}$ and
\[
x = \int_{\frac{1}{k}}^{u_k(x)} \frac{dt}{\phi(t)} \qquad x \in (-2,2).
\]
By differentiating this we obtain 
\begin{equation}\label{lelu_ody}
\frac{u_k'}{\phi(u_k)}  = 1 \qquad \text{and} \qquad u_k''= \phi'(u_k) u_k'.
\end{equation}
Recall that $\phi(t)= \eta(t)t$. Since $\eta$ is slowly increasing and  \eqref{toisin_pain}  holds it is easy to see that \eqref{lelu_ody} gives
\[
 \phi'(u_k) \leq C \eta(u_k) \leq  C \eta(\eta(u_k)u_k) = C  \eta(\phi(u_k)) =  C  \eta(u_k'). 
\]
Therefore \eqref{lelu_ody} implies that $u_k$ satisfies
\[
|u_k''(x)| \leq C \phi(u_k'(x)).
\]
Since $u_k$ is nondecreasing we have $m = \inf_{-1<x<1 }u_k(x) =  u(-1)$ and $M= \sup_{-1<x<1 }u_k(x) = u_k(1)$, and therefore  
\[
2 = \int_{\frac{1}{k}}^{u_k(1)} \frac{dt}{\phi(t)} - \int_{\frac{1}{k}}^{u_k(-1)} \frac{dt}{\phi(t)} = \int_{m}^{M} \frac{dt}{\phi(t)}.
\] 
This estimate  is  optimal. In particular, it is not true that the ratio
\[
\frac{u_k(1)}{u_k(-1)}
\]
is uniformly bounded for $k \in \N$.

\section{Proof of the  Harnack inequality}

In this section we prove Theorem \ref{weakHarnack}. We begin by  following  the proof of the Krylov-Safonov theorem found by Caffarelli (\cite{C1}, \cite{CC}) and obtain a decay estimate on small scales (Lemma \ref{decay_est}).    In the first lemma we construct a barrier function. The proof can be found in Appendix \ref{appendix}.
\begin{lemma}
\label{barrior}
There is a smooth function $\vphi$ in $\R^n$ and universal constants $L_1$ and $r_0 \in (0,1]$ such that 
\begin{enumerate}
\item[(i)] $\vphi = 0$ on  $\partial  B_{2r_0}$,
\item[(ii)] $\vphi \leq -2 $ in  $B_{r_0}$,
\item[(iii)] $\Pu_{\lambda, \Lambda}^-(D^2 \vphi) \geq \phi(|D\vphi|) -  C\xi$ in  $B_{2r_0}$,  
\end{enumerate} 
where $0\leq \xi \leq 1$ is a continuous function such that $\text{supp} \, \xi  \subset B_{\frac{r_0}{2}}$. Moreover, $\vphi \geq -L_1$ in $B_{2r_0}$ and $|D \vphi| \geq L_1^{-1}$ in $B_{2r_0} \setminus B_{\frac{r_0}{2}}$.
\end{lemma}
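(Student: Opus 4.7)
The natural approach is to look for $\vphi$ radially symmetric, $\vphi(x)=g(|x|)$, with $g$ smooth, $g(2r_0)=0$, $g(r)\leq -2$ for $r\leq r_0$, and $g'>0$, $g''<0$ on the outer annulus $A:=[r_0/2,2r_0]$. Inside $B_{r_0/2}$ we will make $\vphi$ essentially constant and absorb any deficit in (iii) into $C\xi$.

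\emph{Reduction to an ODE inequality.} For a radial function with $g'>0$ and $g''<0$, the Hessian $D^2\vphi$ has one negative eigenvalue $g''(r)$ (radial direction) and $n-1$ positive eigenvalues $g'(r)/r$ (tangential). Hence
\[
\Pu^-_{\lambda,\Lambda}(D^2\vphi)=\lambda|g''(r)|-\Lambda(n-1)\frac{g'(r)}{r}.
\]
On $A$ the condition (iii) becomes the ODE inequality
\[
-\lambda g''(r)\;\geq\;\phi(g'(r))+\Lambda(n-1)\frac{g'(r)}{r}.
\]
Writing $u=g'$, this is $-\lambda u'\geq u\,\eta(u)+\Lambda(n-1)u/r$.

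\emph{Solving the ODE.} I would define $u$ on $A$ as the solution (integrated backwards from $r=2r_0$) of the equation obtained by replacing ``$\geq$'' with ``$=$'', starting from a small positive value $u(2r_0)=u_0$. Since $\eta$ is nondecreasing on $[1,\infty)$ and bounded on bounded intervals, standard ODE theory provides a smooth, decreasing (in $r$) solution; setting $g(r):=-\int_r^{2r_0}u(s)\,ds$ gives a concave increasing $g$ with $g(2r_0)=0$ satisfying the Pucci inequality on $A$ by construction. The two universal parameters $u_0$ and $r_0\in(0,1]$ are chosen so that $g(r_0)\leq -2$ (forcing $u_0$ large enough that the integral of $u$ over $[r_0,2r_0]$ exceeds $2$), while keeping $u(r_0/2)$ controlled.

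\emph{Checking that the parameters can be chosen universally.} This is the main obstacle, because the drift is nonlinear and the Caffarelli-type power ansatz $u(r)=Br^{-\alpha}$ alone need not give sharp enough control. The key ingredient is Proposition \ref{prop_hitaasti}: the slowly varying property of $\eta$ (consequence of (P2)) and especially the almost-monotonicity $r\,\eta(t/r)\leq \Lambda_2 s\,\eta(t/s)$ for $r<s$, which converts the pointwise drift bound on $A$ into a bound of the form $r\,\eta(u(r))\leq C(\phi)$, absorbable into the $\alpha\lambda$ term once $\alpha$ (or equivalently the initial slope) is taken large enough depending only on $\lambda,\Lambda,n$ and $\phi$. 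The conditions $\phi(t)\geq t$ and (P3) are used to compare $u/\phi(u)$ with constants, so the $\Lambda(n-1)u/r$ term can be merged into the ODE without issue.

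\emph{Extension to $B_{r_0/2}$ and construction of $\xi$.} Once $g$ is defined on $A$ with the desired properties, I would extend it to $[0,r_0/2]$ by a smooth cut-off to a constant value $g(0)\geq -L_1$, say by a standard mollification of the piecewise function equal to $g(r_0/2)$ on $[0,r_0/4]$ and $g$ on $[r_0/2,2r_0]$. On the inner ball $B_{r_0/2}$ the derivatives of $\vphi$ are bounded by constants depending only on universal quantities, so $\phi(|D\vphi|)$ is bounded and the defect $[\phi(|D\vphi|)-\Pu^-_{\lambda,\Lambda}(D^2\vphi)]^+$ is a continuous function supported in $B_{r_0/2}$, bounded by a universal constant $C$. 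I then define $\xi$ to be any smooth nonnegative function $\leq 1$, supported in $B_{r_0/2}$, that dominates this defect divided by $C$. Properties (i), (ii), $\vphi\geq -L_1$, and $|D\vphi|\geq L_1^{-1}$ on $B_{2r_0}\setminus B_{r_0/2}$ follow at once from the monotonicity and concavity of $g$ together with the chosen parameter bounds.
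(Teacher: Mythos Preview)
Your approach differs from the paper's, and in one important respect your assessment is mistaken: you explicitly set aside ``the Caffarelli-type power ansatz $u(r)=Br^{-\alpha}$'' as insufficient, but this is precisely what the paper uses, and it works cleanly. The paper defines $\vphi(x)=M_1-M_2|x|^{-\alpha}$ on the annulus $B_{2r_0}\setminus B_{r_0/2}$ with $\alpha=\max\{2(n-1)\Lambda/\lambda,1\}$ and $M_1,M_2$ determined by (i), (ii). A direct computation gives
\[
\Pu_{\lambda,\Lambda}^-(D^2\vphi)\geq \alpha M_2|x|^{-\alpha-2}\bigl(\lambda(\alpha+1)-\Lambda(n-1)\bigr)\geq c(\alpha,\lambda)\,r_0^{-2},
\]
while $\phi(|D\vphi|)\leq \phi(C_\alpha r_0^{-1})=C_\alpha r_0^{-1}\,\eta(C_\alpha r_0^{-1})$. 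So (iii) on the annulus reduces to $c\,r_0^{-1}\geq C_\alpha\,\eta(C_\alpha r_0^{-1})$, which follows for $r_0$ small enough from Proposition~\ref{prop_hitaasti}(ii), namely $\eta(t)/t\to 0$. The smooth extension to $B_{r_0/2}$ and the definition of $\xi$ then proceed exactly as you describe.

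Your ODE approach is not wrong in spirit, but it is more involved and the key step---choosing $u_0$ large enough that $\int_{r_0}^{2r_0}u\geq 2$ while keeping $u(r_0/2)$ universally bounded---is not justified in your outline. Note that the separable equation $-\lambda u'=u\eta(u)$ can blow up in finite time if $\int^\infty \frac{ds}{s\eta(s)}<\infty$ (e.g.\ $\eta(t)=(\log t)^2$), so existence on the full annulus is not automatic; what rescues you is again precisely the scaling $\eta(t)=o(t)$, which shows that on an interval of length $\sim r_0$ the growth of $u$ from $\sim 1/r_0$ is controlled. In other words, carrying out your programme rigorously ends up using the same sublinearity of $\eta$ that makes the power ansatz succeed directly---so the detour through the ODE buys nothing.
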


 In the second lemma we use the ABP estimate and  the previous barrier function to obtain the decay estimate. The proof is again in Appendix \ref{appendix}.

\begin{lemma}
\label{decay_est}
Let  $u \in C(B_{2})$  be a nonnegative supersolution of \eqref{model1} in $B_{2}$. If $\inf_{B_{1}} u \leq 1$ then it holds
\[
|\{x\in B_1  \mid   u(x) \leq L_1\}| > \mu,
\]
for universal constants $L_1>1$ and $\mu>0$. 
\end{lemma}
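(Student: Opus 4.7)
The plan is to combine the barrier of Lemma \ref{barrior} with the Alexandroff--Bakelman--Pucci (ABP) estimate for Pucci supersolutions, applied to the perturbation $v=u+\varphi(\cdot-x_0)$ centered at a near-minimum point of $u$. Since $\inf_{B_1}u\le 1$, pick $x_0\in\overline{B_1}$ with $u(x_0)\le 3/2$; after possibly shrinking $r_0$ so that $r_0\le 1/2$ (absorbing the loss into the universal constants), we have $B_{2r_0}(x_0)\subset B_2$. Define $v(x):=u(x)+\varphi(x-x_0)$ on $\overline{B_{2r_0}(x_0)}$. By Lemma \ref{barrior}(i), $v=u\ge 0$ on $\partial B_{2r_0}(x_0)$, and by Lemma \ref{barrior}(ii), $v(x_0)\le u(x_0)-2\le -1/2$, so $\inf v \le -1/2$.

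Next, I would establish the differential inequality for $v$. If $\psi\in C^{2}$ touches $v$ from below at $y$, then $\psi-\varphi(\cdot-x_0)$ touches $u$ from below at $y$, so by the supersolution property of $u$ and subadditivity of $\mathcal{P}^+_{\lambda,\Lambda}$ (together with $-\mathcal{P}^+(-X)=\mathcal{P}^-(X)$),
\[
\mathcal{P}^{+}_{\lambda,\Lambda}(D^{2}\psi(y))
\ge \mathcal{P}^{+}_{\lambda,\Lambda}\bigl(D^{2}\psi(y)-D^{2}\varphi(y-x_0)\bigr)+\mathcal{P}^{-}_{\lambda,\Lambda}(D^{2}\varphi(y-x_0)).
\]
Inserting the supersolution bound $\mathcal{P}^+(D^2\psi-D^2\varphi)\ge -\phi(|D\psi-D\varphi|)$ and the barrier estimate from Lemma \ref{barrior}(iii), I obtain that $v$ is a viscosity supersolution of
\[
\mathcal{P}^{+}_{\lambda,\Lambda}(D^{2}v)\ge -\phi(|Dv-D\varphi(\cdot-x_0)|)+\phi(|D\varphi(\cdot-x_0)|)-C\xi(\cdot-x_0) \ge -g,
\]
where $g:=\phi(|Dv-D\varphi(\cdot-x_0)|)+C\xi(\cdot-x_0)$ (the nonnegative term $\phi(|D\varphi|)$ is dropped).

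Now I apply the ABP estimate on $B_{2r_0}(x_0)$: since $v\ge 0$ on the boundary and $\inf v\le-1/2$, there is a universal $C$ with
\[
\tfrac{1}{2}\le \sup_{B_{2r_0}(x_0)} v^{-}\le C\,\|g^{+}\|_{L^{n}(K)},\qquad K:=\{v=\Gamma_{v}\}\cap B_{2r_0}(x_0),
\]
where $\Gamma_v$ denotes the convex envelope of $-v^-$ extended by zero. The key point, standard in the ABP technology, is that on the contact set $K$ the lower convex envelope provides a supporting hyperplane whose slope is bounded by $\sup v^{-}/r_0\le C$, so $|Dv|\le C$ in the appropriate viscosity sense on $K$. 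Combined with the smooth bound on $|D\varphi|$, this yields $|Dv-D\varphi(\cdot-x_0)|\le C$ on $K$, hence by monotonicity of $\phi$, $g\le\phi(C)+C\le C$ on $K$.

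Plugging this into the ABP bound gives $\tfrac{1}{2}\le C|K|^{1/n}$, so $|K|\ge\mu>0$ for a universal constant $\mu$. On $K$ one has $v\le 0$, i.e.\ $u\le -\varphi(\cdot-x_0)\le L_1$ by Lemma \ref{barrior}, so $K\subset\{u\le L_1\}\cap B_{2}$, finishing the proof. The main technical delicacy is the passage from the viscosity supersolution property of $u$ to a supersolution inequality for $v$ with a right-hand side that remains bounded on the ABP contact set despite the nonlinear drift $\phi(|Du|)$; the construction of the barrier in Lemma \ref{barrior} is precisely tailored so that the unbounded $\phi(|Du|)$ contribution is absorbed by the good $\phi(|D\varphi|)$ term.
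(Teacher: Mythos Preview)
Your argument is correct and follows the same ABP-plus-barrier framework as the paper, but you take a more direct route at the key step. The paper keeps the positive contribution $\phi(|D\varphi|)$, uses the lower bound $|D\varphi|\ge L_1^{-1}$ on the annulus $B_{2r_0}\setminus B_{r_0/2}$ and the local Lipschitz continuity of $\phi$ to rewrite the right-hand side as a \emph{linear} drift $b|D\Gamma_v|$ plus a forcing $C_1\xi_1$ supported in $B_{r_0}(x_0)$, and then handles the linear drift with a weighted ABP integral $\int \frac{dp}{|p|^n+\delta}$. You instead discard the positive term and observe that on the contact set the supporting-hyperplane slopes are bounded by $\sup v^-/r_0\le L_1/r_0$ (using $u\ge 0$ and $\varphi\ge -L_1$), so the entire right-hand side is bounded by a universal constant on $K$ and the unweighted ABP suffices. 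Both arguments yield the same lower bound on the measure of the contact set; yours is more elementary (no weighted integral, no annulus/core splitting), while the paper's has the incidental advantage of localizing the measure estimate to $B_{r_0}(x_0)$, which however is not needed for the statement.

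One point to tighten: the ABP estimate as usually quoted (e.g.\ Caffarelli--Cabr\'e, Theorem~3.2) has a right-hand side $f=f(x)$, not $f=f(Du,x)$, so you cannot simply cite it. Do as the paper does and first pass to the inf-convolution so that $u$ is semiconcave and $\Gamma_v\in C^{1,1}$ on the contact set; then the viscosity inequality holds a.e.\ on $K$ with $D\Gamma_v$, $D^2\Gamma_v$ in place of $Dv$, $D^2v$, and your uniform bound $|D\Gamma_v|\le L_1/r_0$ feeds directly into the determinant/area-formula step. With that made explicit, your proof is complete.
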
 

Let us next reformulate the statement of Theorem \ref{weakHarnack}.

\begin{remark} \label{triviaali_ska}
Suppose that $v \in C(B_{2R}(x_0))$ satisfies the assumptions of Theorem \ref{weakHarnack}. Then the function $v_R \in C(B_{2})$
\[
v_R(x) := \frac{1}{R} v(Rx+x_0)
\]
is a supersolution of 
\begin{equation}
\label{model1.1}
\Pu_{\lambda, \Lambda}^+(D^2 u)  \geq -R\phi(|Du|)  
\end{equation}
and a subsolution of 
 \begin{equation}
\label{model2.2}
\Pu_{\lambda, \Lambda}^-(D^2 u)  \leq R\phi(|Du|). 
\end{equation}
Therefore in order to prove Theorem \ref{weakHarnack} we need to show that   a nonnegative function $u \in C(B_{2})$ which is a viscosity supersolution of  \eqref{model1.1}  and a viscosity subsolution of \eqref{model2.2} in $B_2$ satisfies 
\[
\int_m^M \frac{dt}{R\phi(t)+ t} \leq C,
\]
where $m = \inf_{B_1}u$ and $M= \sup_{B_1}u$. 
\end{remark}

We now come  to the point where the proof of Theorem \ref{weakHarnack} truly differs from the classical  proof  of  Krylov-Safonov theorem. Since the equation \eqref{model1} is not scaling invariant we can not simply iterate Lemma \ref{decay_est}.  We overcome this problem by the following scaling argument. If $u$ is a supersolution of \eqref{model1} and $A>0$ is a given number,  we can  find   $r_A>0$ such that the rescaled  function
\[
\tilde{u}(x) = \frac{u(r x)}{A}
\] 
is again a supersolution of  \eqref{model1} for all $r \leq r_A$. This will be done in the next lemma, which  we will  use frequently. In this lemma we  need the assumption (P2) on  $\phi$.

\begin{lemma}
\label{skaalaus}
Let  $u \in C(B_{2})$  be a  supersolution of \eqref{model1.1} in $B_{2}$ for $R\leq 1$. There exists a universal constant $L_2$ such that if   $A \in (0,\infty)$ then for every $r \leq r_A$ for
\[
r_A = \frac{1}{L_2}\frac{A}{R\phi(A) + A} = \frac{1}{L_2 (R\eta(A)+ 1)}
\]
the rescaled function
\[
\tilde{u}(x) :=  \frac{u(rx)}{A}
\]
is a supersolution of \eqref{model1} in its domain, i.e., $\tilde{u}$ is a supersolution of  \eqref{model1.1} for $R=1$.
\end{lemma}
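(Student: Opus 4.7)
The plan is to verify the supersolution property of $\tilde u$ directly from the viscosity definition by a change of variables in the test functions. Given $\varphi\in C^2$ touching $\tilde u$ from below at $y_0$, the function $\psi(x):=A\varphi(x/r)$ touches $u$ from below at $x_0:=ry_0\in B_2$, and 1-homogeneity of $\Pu_{\lambda,\Lambda}^+$ together with the supersolution property of $u$ for \eqref{model1.1} reduces the required inequality $\Pu_{\lambda,\Lambda}^+(D^2\varphi(y_0))\geq -\phi(|D\varphi(y_0)|)$ to a purely pointwise comparison on $\phi$: writing $q:=|D\varphi(y_0)|$, it suffices to show
\[
\frac{r^2R}{A}\,\phi(Aq/r)\leq \phi(q)\qquad\text{for every } q\geq 0.
\]
Factoring $\phi(t)=\eta(t)t$ and cancelling $rq/A$, this reduces to
\[
rR\,\eta(Aq/r)\leq \eta(q).
\]

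I would attack this inequality by first separating the two gradient scales with (P3). Since $Aq/r=(A/r)\cdot q$,
\[
\eta(Aq/r)\leq \Lambda_0\,\eta(A/r)\,\eta(q),
\]
so it is enough to prove $rR\Lambda_0\,\eta(A/r)\leq 1$. The hypothesis on $r$ is equivalent to $rL_2(R\eta(A)+1)\leq 1$ and therefore gives both $rR\eta(A)\leq 1/L_2$ and $r\leq 1/L_2$; what remains is to bound $\eta(A/r)$ by a universal multiple of $\eta(A)$.

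The main obstacle is that $A/r$ can be much larger than $A$ when $R\eta(A)$ is large, and the naive estimate $r\eta(A/r)\leq \Lambda_2\eta(A)$ coming from Proposition \ref{prop_hitaasti}(iv) is not tight enough on its own. I would exploit instead the elementary observation $R\eta(A)+1\leq 2\eta(A)$, which follows from $R\leq 1$ and $\eta(A)\geq 1$ (the latter from $\phi(t)\geq t$). This gives
\[
A/r = AL_2(R\eta(A)+1)\leq 2L_2\,\phi(A).
\]
A short split on whether $A/r\leq 1$ or $A/r\geq 1$ then yields $\eta(A/r)\leq C(L_2)\eta(A)$: in the first case $A\leq A/r\leq 1$ and monotonicity of $\eta$ on $(0,1]$ gives $\eta(A/r)\leq \eta(A)$; in the second case monotonicity on $[1,\infty)$ together with (P3) and Proposition \ref{prop_hitaasti}(iii) yields
\[
\eta(A/r)\leq \eta(2L_2\phi(A))\leq \Lambda_0\,\eta(2L_2)\,\eta(\phi(A))\leq \Lambda_0\Lambda_1\,\eta(2L_2)\,\eta(A).
\]

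Combining everything, $rR\Lambda_0\,\eta(A/r)\leq \Lambda_0^2\Lambda_1\,\eta(2L_2)/L_2$. By Proposition \ref{prop_hitaasti}(ii) one has $\eta(t)/t\to 0$ as $t\to\infty$, so fixing $L_2$ universal and large enough that $\Lambda_0^2\Lambda_1\,\eta(2L_2)\leq L_2$ produces the required inequality for every $q\geq 0$ and completes the proof.
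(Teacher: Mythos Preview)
Your reduction to the pointwise inequality $rR\,\eta(Aq/r)\le\eta(q)$ and the first splitting via (P3) are correct and match the paper. The gap is in the step where you write $A/r = AL_2(R\eta(A)+1)\le 2L_2\phi(A)$. The hypothesis $r\le \frac{1}{L_2(R\eta(A)+1)}$ gives $A/r\ge AL_2(R\eta(A)+1)$, not equality, so your upper bound on $A/r$ is false whenever $r$ is strictly below the threshold. In fact the intermediate goal you set, ``bound $\eta(A/r)$ by a universal multiple of $\eta(A)$'', is impossible in general: take $A=1$ and let $r\to 0$; then $\eta(A/r)=\eta(1/r)\to\infty$ while $\eta(A)=\eta(1)$ is fixed. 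So the second branch of your case split ($A/r\ge 1$) breaks, and the final inequality $rR\Lambda_0\eta(A/r)\le \Lambda_0^2\Lambda_1\eta(2L_2)/L_2$ is unjustified.

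The fix is exactly the application of Proposition~\ref{prop_hitaasti}(iv) that you dismissed, but applied with the right choice of $s$. You used $s=1$ to get $r\eta(A/r)\le\Lambda_2\eta(A)$, which indeed loses the factor of $r$ needed to absorb $R\eta(A)$. The paper instead takes $s=r_{\max}:=\frac{1}{L_2(R\eta(A)+1)}$ in (iv) to obtain $r\eta(At/r)\le\Lambda_2\, r_{\max}\eta(At/r_{\max})$ for every $r\le r_{\max}$, thereby reducing to the single case $r=r_{\max}$. At that point $A/r_{\max}=L_2(R\eta(A)+1)A\le 2L_2\phi(A)$ genuinely holds, and the rest of your argument (or the paper's version, using (P3) and Proposition~\ref{prop_hitaasti}(iii) to control $\eta(\phi(A))/\eta(A)$, then (ii) to choose $L_2$) goes through.
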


\begin{proof}
We have
\[
D \tilde{u}(x) = \frac{r}{A}Du(rx) \quad \text{and} \quad D^2 \tilde{u}(x) =  \frac{r^2}{A}Du(rx).
\]
Hence, $\tilde{u}$ is a supersolution of 
\[
\Pu_{\lambda, \Lambda}^+( D^2 \tilde{u}(x))  \geq  - \frac{r^2R}{A} \phi\left( \frac{A}{r}|D \tilde{u}(x) |\right) 
\]
in its domain. Therefore the claim follows once we show
\[
 \frac{r^2R}{A} \phi\left(At/r \right) \leq \phi(t) \quad \text{for every }\, t>0.
\]
Since  $\phi(t)= \eta(t)t$ this  is equivalent to 
\begin{equation}
\label{re_skaalaus}
rR \, \eta\left(At/r \right) \leq  \eta(t).
\end{equation}
Recall that it holds $\eta(t)\geq 1$.

Proposition \ref{prop_hitaasti} (iii)  and the monotonicity condition (P1) imply 
\[
 \frac{\eta((R\eta(A)+1)A) }{\eta(A)}\leq C_0
\]
for a universal constant $C_0$. We use  Proposition \ref{prop_hitaasti} (iv) and the conditions (P1) and (P3) to conclude that for some constant $C_0$ it holds
\[
\begin{split}
rR \, \eta\left(\frac{At}{r} \right)  &\leq C_0 r_A R \, \eta\left(\frac{At}{r_A} \right) \\
&\leq \frac{C_0}{L_2 \eta(A)} \eta( L_2 (R\eta(A)+1) A t) \\
&\leq C_0^2 \frac{\eta(L_2) }{L_2} \frac{\eta((R\eta(A)+1)A) }{\eta(A)}  \eta(t)\\
&\leq  C_0^3 \frac{\eta(L_2) }{L_2}   \eta( t). 
\end{split}
\]
By Proposition \ref{prop_hitaasti} (ii) we may choose $L_2$ such that 
\[
  \frac{\eta(L_2) }{L_2}    \leq \frac{1}{C_0^3}
\]
and  \eqref{re_skaalaus} follows. 
\end{proof}

We denote the open $\delta$-neighbourhood of a set $S \subset \R^n$ by
\begin{equation} 
\label{delta_neighbourhood}
\mathcal{I}_{\delta}(S) = \{ x \in \R^n \mid \dist(x,S) < \delta\}.
\end{equation} 
We need the following corollary of the relative isoperimetric inequality.
\begin{lemma}
\label{isop_ineq}
 There is a dimensional constant $c_n>0$ such that for every set  $E \subset B_R$ and for every $\delta \in (0,1)$ it holds
\[
\big| \mathcal{I}_{\delta}(\partial E \cap B_R)\cap E \big|  \geq c_n\min \Bigl\{  \delta  |B_R \setminus E|^{\frac{n-1}{n}},  \delta  |E |^{\frac{n-1}{n}} , |E|\Bigl\}.
\]
\end{lemma}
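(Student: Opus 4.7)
The plan is to reduce the claim to the standard relative isoperimetric inequality in $B_R$ by a slicing argument in the distance function $d(x) := \dist(x, \partial E \cap B_R)$. Set $F := \mathcal{I}_{\delta}(\partial E \cap B_R) \cap E$. If $|F| \geq \tfrac12 |E|$ the estimate is immediate with the third term in the minimum, so from now on I assume $|E \setminus F| \geq \tfrac12 |E|$.

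For each $t \in (0,\delta)$ I introduce the sliced set
\[
E_t := \{x \in E : d(x) > t\} \subset E.
\]
The key geometric observation is that $\overline{E_t}$ stays at distance $\geq t > 0$ from $\partial E \cap B_R$, and therefore cannot meet $B_R \setminus E$. Consequently the relative boundary of $E_t$ in $B_R$ sits inside the level set $\{d = t\}\cap E$, which gives the perimeter bound $P(E_t, B_R) \leq \mathcal{H}^{n-1}(\{d = t\} \cap E)$ for a.e.\ $t$.

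Applying the standard relative isoperimetric inequality in $B_R$ to $E_t$ yields
\[
\min(|E_t|,\, |B_R \setminus E_t|)^{(n-1)/n} \;\leq\; C_n\, \mathcal{H}^{n-1}(\{d = t\} \cap E).
\]
Since $E \setminus F \subset E_t$ and $B_R \setminus E \subset B_R \setminus E_t$, the left-hand side is bounded below by $\min(\tfrac12 |E|,\, |B_R \setminus E|)^{(n-1)/n}$. Integrating over $t \in (0,\delta)$ and invoking the coarea formula for the $1$-Lipschitz function $d$ on the set $E \cap \{0 < d < \delta\} \subset F$,
\[
\int_0^{\delta} \mathcal{H}^{n-1}(\{d = t\} \cap E)\, dt \;\leq\; |F|,
\]
one obtains $|F| \geq c_n\, \delta \min(|E|,\, |B_R \setminus E|)^{(n-1)/n}$, which combined with the trivial case above completes the proof.

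\textbf{Main obstacle.} The only delicate point is the rigorous justification of the perimeter bound $P(E_t, B_R) \leq \mathcal{H}^{n-1}(\{d = t\}\cap E)$ for a generic measurable $E$. This follows from the coarea formula applied to $d$ (or, alternatively, by first approximating $E$ from outside by an open set so that $d$ becomes distance to a closed set and its level sets are $\mathcal{H}^{n-1}$-rectifiable for a.e.\ $t$). Everything else is a routine chain of standard inequalities.
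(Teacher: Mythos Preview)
Your proof is correct and follows essentially the same route as the paper: slice $E$ by the distance function $d(\cdot)=\dist(\cdot,\partial E\cap B_R)$, apply the relative isoperimetric inequality in $B_R$ to the superlevel sets $E_t$, and integrate in $t$ via the coarea formula. The only cosmetic differences are that the paper first reduces to the case $|E|\le |B_R\setminus E|$ and invokes the BV coarea formula to obtain $|E\setminus E_\delta|\ge \int_0^\delta P(E_s,B_R)\,ds$ in one stroke, whereas you keep both cases and factor that step through the intermediate bound $P(E_t,B_R)\le \mathcal H^{n-1}(\{d=t\}\cap E)$ together with the Lipschitz coarea formula; the content is the same.
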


\begin{proof}
We may assume  that $|E| \leq  |B_R \setminus E|$, for  in  the case $|E| \geq  |B_R \setminus E|$ the argument is similar. Let us  denote
\[
E_{s}:= \{ x \in E \mid d_{\partial E}(x) > s\}
\]
where $d_{\partial E}(x)= \dist(x, \partial E \cap B_R)$. Notice that $E_{t} \subset E_s$ for every $s <t$ and $|E_{\delta}| \leq \frac{1}{2}|B_R|$. If $|E_{\delta}| \leq  \frac{1}{2}|E|$, we have
\[
| \mathcal{I}_{\delta}(\partial E \cap B_R)\cap E| = |E \setminus E_{\delta}| =|E| -  |E_{\delta}|  \geq  \frac{1}{2}|E|
\]
and the claim follows. Let us then treat the case   $|E_{\delta}| \geq  \frac{1}{2}|E|$.  Note that then $|E_{s}| \geq  \frac{1}{2}|E|$ for every $s \in (0, \delta)$. 

Lipschitz continuity of $d_{\partial E}$ implies that for almost  every $s \in (0, \delta)$  the  set $E_s$ has finite perimeter \cite[Theorem 3.40]{AFP}. By the relative  isoperimetric inequality \cite[Remark 3.50] {AFP} we have
\begin{equation}
\label{rel_isop_ineq}
P(E_s, B_R) \geq \tilde{c}_n \min \{|B_R \setminus E_s|^{\frac{n-1}{n}} , |E_s|^{\frac{n-1}{n}} \} \geq c_n |E|^{\frac{n-1}{n}}
\end{equation}
for some dimensional constants $\tilde{c}_n$ and $c_n$, where the last inequality follows from  $\frac{1}{2}|E| \leq |E_s| \leq  |B_R \setminus E_s|$. Here $P(E_s, B_R)$ denotes the perimeter of $E_s$ in the ball $B_R$. Since $|\nabla d_{ E}(x)|= 1$ for almost every $x \in   E$, the coarea formula in $BV$  \cite[Theorem 3.40]{AFP} and \eqref{rel_isop_ineq} yield
\[
\begin{split}
|\mathcal{I}_{\delta}(\partial E \cap B_R) \cap E| = |E \setminus E_{\delta}| &= \int_{E \setminus E_{\delta}}|\nabla d_{E}(x)|\, dx\\
&\geq \int_0^{\delta} P(E_s, B_R)\, d s \geq  c_n \delta |E|^{\frac{n-1}{n}}.
\end{split}
\] 
\end{proof}

In the next lemma we study the decay of the level sets
\[
\{ x \in B_{\frac{5}{3}} \mid u(x) > t\}.
\]
From now on we assume that $u$  is a positive  supersolution of \eqref{model1.1} for $R\leq 1$  in $B_2$ and  denote $m= \inf_{B_1}u>0$. We study the level sets
\begin{equation}
\label{def.A_k}
A_k := \{ x \in B_{\frac{5}{3}} \mid u(x) > L^km\}
\end{equation}
where $L$ is a uniform constant which  will be chosen later.

As we mentioned in the introduction the proof is based on an observation that for a fixed $k$, due to a scaling argument and Lemma \ref{skaalaus}, we may use a rescaled version of Lemma \ref{decay_est}  in a $a_k$-neighborhood of $\partial A_k$  in $A_k$ where
\begin{equation}
\label{def.a_k}
a_k := \frac{L^{k-1}m}{R \phi(L^km) + L^km} = \frac{1}{ L} \cdot \frac{1}{R\eta(L^km)+1}.
\end{equation}
A standard covering argument then implies that  part  of the  $a_k$-neighborhood of $\partial A_k$ in $A_k$ does not belong to the next level set $A_{k+1}$. We then  use Lemma \ref{isop_ineq} to estimate the size of the set $A_k \setminus A_{k+1}$. 
\begin{lemma}
\label{lemma_perus}
Let  $u$  be a positive  supersolution of \eqref{model1.1} for $R\leq 1$ in $B_2$ and let $L= \max\{L_1, L_2,6 \}$,  where $L_1, L_2$ are the  constants  from Lemma \ref{decay_est} and Lemma \ref{skaalaus}. Let us  denote $m= \inf_{B_1}u$ and suppose that  the set $A_k$ is defined in  \eqref{def.A_k} and the number $a_k$ in \eqref{def.a_k}. Then it holds
\[
|A_k \setminus A_{k+1}|  \geq c_0  \min \{ a_k|B_{\frac{5}{3}} \setminus A_k|^{\frac{n-1}{n}}, a_k|A_k|^{\frac{n-1}{n}} , |A_k|\},
\]
for a uniform constant $c_0 \in (0,1)$.
\end{lemma}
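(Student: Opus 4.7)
The plan is to implement, in sequence, the three ingredients announced in the paragraph preceding the lemma: first a rescaling via Lemma~\ref{skaalaus} so that Lemma~\ref{decay_est} may be applied at scale $a_k$ in a neighborhood of $\partial A_k$, then a covering/Fubini argument to convert this pointwise decay into a genuine volume bound, and finally the relative isoperimetric inequality of Lemma~\ref{isop_ineq} to obtain the claimed lower bound on $|A_k\setminus A_{k+1}|$.

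\textbf{Rescaling and decay.} I would set $A:=L^km$ and $r:=a_k$. Since $L\geq L_2$ the choice $r=a_k$ is admissible in Lemma~\ref{skaalaus}, so that
\[
\tilde u(z):=\frac{u(x_0+a_kz)}{L^km}
\]
is a nonnegative supersolution of \eqref{model1.1} with $R=1$ on $B_2$ for every $x_0\in B_{5/3}$ (the choice $L\geq 6$ ensures $B_{2a_k}(x_0)\subset B_2$). Define $V_k:=\mathcal{I}_{a_k}(\partial A_k\cap B_{5/3})\cap A_k$. For each $x_0\in V_k$ I would pick $y\in\partial A_k\cap B_{5/3}$ with $|y-x_0|<a_k$; continuity forces $u(y)=L^km$, so $\tilde u((y-x_0)/a_k)=1$ and $\inf_{B_1}\tilde u\leq 1$. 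Lemma~\ref{decay_est} then produces a subset of $B_2$ of measure $>\mu$ on which $\tilde u\leq L_1$, and since $L\geq L_1$ this unscales to
\[
|B_{2a_k}(x_0)\cap\{u\leq L^{k+1}m\}|>\mu\,a_k^n\qquad\text{for every }x_0\in V_k.
\]

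\textbf{Covering and isoperimetric conclusion.} Next I would decompose $\{u\leq L^{k+1}m\}=A_k^c\cup(A_k\setminus A_{k+1})$ and split $V_k=V_k^{\mathrm g}\sqcup V_k^{\mathrm b}$ according to whether $|B_{2a_k}(x_0)\cap A_k^c|\leq\tfrac{\mu}{2}a_k^n$ or not. On $V_k^{\mathrm g}$ the previous estimate upgrades to $|B_{2a_k}(x_0)\cap(A_k\setminus A_{k+1})|\geq\tfrac{\mu}{2}a_k^n$, and a Fubini computation yields $|A_k\setminus A_{k+1}|\geq c\,|V_k^{\mathrm g}|$. In the opposite regime $|V_k^{\mathrm b}|\geq\tfrac{1}{2}|V_k|$ the same Fubini forces $A_k^c$ to carry mass of order $|V_k|$ in the $3a_k$-neighborhood of $\partial A_k\cap B_{5/3}$, and I would then rerun the rescaled-decay/covering argument anchored now at points on the $A_k^c$-side of the boundary (where $\tilde u(0)\leq 1$ holds automatically) to feed the excess back across the boundary and deduce $|A_k\setminus A_{k+1}|\geq c\,|V_k^{\mathrm b}|$. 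In either case $|A_k\setminus A_{k+1}|\geq c\,|V_k|$, and since $a_k\leq 1$, Lemma~\ref{isop_ineq} applied to $E=A_k$ inside $B_{5/3}$ with $\delta=a_k$ gives
\[
|V_k|\geq c_n\min\bigl\{\,a_k|B_{5/3}\setminus A_k|^{\frac{n-1}{n}},\ a_k|A_k|^{\frac{n-1}{n}},\ |A_k|\,\bigr\},
\]
so the argument closes with $c_0:=c\cdot c_n$.

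\textbf{Main obstacle.} The rescaling step is essentially bookkeeping once Lemma~\ref{skaalaus} is available — this is precisely where property (P2) of $\eta$ is doing the heavy lifting — and the final isoperimetric application is automatic. The genuine difficulty is the covering argument: the decay estimate only detects mass in the ambient set $A_{k+1}^c$, and the hard part will be to guarantee that a uniform fraction of that mass actually lies in the thin annular layer $A_k\setminus A_{k+1}$ rather than being absorbed by the potentially much larger set $A_k^c$. Handling the ``bad'' regime cleanly — where most of $V_k$ sees a lot of $A_k^c$ in its $2a_k$-neighborhood — is the most delicate point of the argument and is what makes the second, $A_k^c$-anchored, application of Lemma~\ref{decay_est} indispensable.
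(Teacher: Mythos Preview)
Your rescaling step and the final isoperimetric application are correct and match the paper. The genuine gap is in your handling of the ``bad'' regime.

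When you re-run Lemma~\ref{decay_est} anchored at points $z$ on the $A_k^c$-side, the conclusion is still only of the form $|B_{2a_k}(z)\cap\{u\le L^{k+1}m\}|>\mu\,a_k^n$. This is the \emph{same} one-sided information as before: it detects mass where $u$ is small, and that mass may again lie entirely in $A_k^c$. The decay lemma never produces a lower bound on $|B\cap A_k|$ or on $|B\cap\{L^km<u\le L^{k+1}m\}|$, so there is no mechanism for ``feeding the excess back across the boundary''. The bad case is not a degenerate configuration one can iterate away: if $A_k$ is locally a thin sliver, then for $x_0\in V_k$ very close to $\partial A_k$ the ball $B_{2a_k}(x_0)$ can be almost entirely in $A_k^c$, and a second pass from the other side yields nothing new.

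The paper resolves precisely this difficulty, but not by a good/bad split. Instead it covers $D_k:=\mathcal{I}_{a_k}(\partial A_k\cap B_{5/3})\cap A_k$ by balls $B_{2r}(x)$ of \emph{variable} radius, taking $x\in A_k$ and $r=\dist(x,\partial A_k\cap B_{5/3})\le a_k$, and then extracts a disjoint Vitali subfamily $\{B_{2r_i}(x_i)\}$ whose fivefold enlargements still cover $D_k$. This choice of $r_i$ does two things at once: $\bar B_{r_i}(x_i)$ touches $\partial A_k$, so after rescaling $\inf_{\bar B_1}\tilde u\le 1$ and Lemma~\ref{decay_est} applies; and the center sits far enough inside $A_k$ that the ball lies in $A_k$, so the decay mass is automatically contained in $A_k\setminus A_{k+1}$ with no leakage into $A_k^c$ to account for. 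Summing over the disjoint Vitali balls then gives $|A_k\setminus A_{k+1}|\ge\mu\sum_i|B_{2r_i}(x_i)|\ge 5^{-n}\mu\,|D_k|$, and Lemma~\ref{isop_ineq} finishes exactly as you described.

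In short, the missing idea is to let the covering radius adapt to the distance to $\partial A_k$; this eliminates the leakage problem at its source and makes the good/bad dichotomy unnecessary.
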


\begin{proof}
Let us  fix $k \geq 1$. Note that 
\[
A_k \setminus A_{k+1} = \{ x \in B_{\frac{5}{3}} : L^km< u \leq  L^{k+1}m \}.
\]  
We denote the open $a_k$-neighborhood of $\partial  A_k$  in $A_k$ by 
\[
D_k := \mathcal{I}_{a_k}(\partial  A_k \cap B_{\frac{5}{3}}) \cap A_k = \{ x \in A_k : \dist(x, \partial A_k \cap  B_{\frac{5}{3}})< a_k\}.
\]
Note that if $A_k \cap B_{\frac{5}{3}} = \emptyset$ the claim is trivially true. Let  $\mathcal{B}$ be collection of all balls $B_{r}(x) \subset  A_k$ with
\[
r = \dist(x, \partial A_k \cap B_{\frac{5}{3}}) \leq a_k \leq \frac{1}{6}. 
\] 
Note that $ B_{2r}(x) \subset B_2$. It is then easy to see that the balls  $ B_{5r}(x) $ , with $ B_{r}(x) \in \mathcal{B}$,  cover $D_k$. By Vitali's covering theorem we may choose a countable subcollection from $\mathcal{B}$, say $ B_{r_i}(x_i)$, which are disjoint and the balls $B_{5^2r_i}(x_i)$ still cover $D_k$. Lemma \ref{isop_ineq} implies
\begin{equation}
\label{using_isop}
\begin{split}
5^{2n}\sum_i |B_{r_i}(x_i)| &=  \sum_i |B_{5^2r_i}(x_i)| \geq |D_k| \\
&\geq c_n\min \Bigl\{  a_k  |B_{\frac{5}{3}} \setminus A_k|^{\frac{n-1}{n}},  a_k  |A_k|^{\frac{n-1}{n}} , |A_k|\Bigl\}.
\end{split}
\end{equation}

Let us fix a ball $B_{r_i}(x_i)$ which belongs to the Vitali cover and rescale $u$ by
\[
\tilde{u}(x):= \frac{u(r_ix + x_i)}{L^km}.
\]
Then $\tilde{u}$ is nonnegative in $B_2$ and $\inf_{B_1} \tilde{u} \leq 1$, which follows from  $\partial B_{r_i}(x_i) \cap \partial A_k \neq \emptyset$. Moreover, since 
\[
r_i \leq a_k = \frac{L^{k-1}m}{R\phi(L^km)+ L^km} \leq  \frac{1}{L_2}\frac{L^{k}m}{R\phi(L^km) + L^km} 
\]
 we deduce from Lemma \ref{skaalaus} that  $\tilde{u}$  is a supersolution of \eqref{model1} in $B_2$. We may thus apply Lemma \ref{decay_est} to conclude that
\begin{equation}
\label{using_decay}
\begin{split}
\frac{|B_{r_i}(x_i) \setminus A_{k+1}|}{|B_{r_i}| } &=  \frac{|\{ x \in B_{r_i}(x_i) : u(x) \leq L^{k+1}m\}|  }{|B_{r_i}| }\\
&=  |\{ y \in B_{1} : \tilde{u}(y) \leq L \}| \\
&\geq   |\{ y \in B_{1} : \tilde{u}(y) \leq L_1 \}| \geq \mu.
\end{split}
\end{equation}

Since $B_{r_i}(x_i) \subset A_k$ are disjoint, we obtain from \eqref{using_isop} and  \eqref{using_decay} that 
\[
\begin{split}
|A_k \setminus A_{k+1}|   &\geq \sum_i |B_{r_i}(x_i) \setminus A_{k+1}| \\
&\geq \mu \sum_i |B_{r_i}(x_i) |\\
&\geq 5^{-2n} c_n \mu \min \Bigl\{  a_k  |B_{\frac{5}{3}} \setminus A_k|^{\frac{n-1}{n}},  a_k  |A_k|^{\frac{n-1}{n}} , |A_k|\Bigl\}.
\end{split}
\]
\end{proof}

We continue by  iterating the estimate from  Lemma \ref{lemma_perus}. Due to the relative isoperimetric inequality the iteration behaves differently depending on the size of $A_k$.
\begin{lemma}
\label{lemma_alku}
Let  the function $u$ be as in Lemma \eqref{lemma_perus} and suppose the sets $A_k \subset B_{\frac{5}{3}}$  and the numbers $a_k \in (0,1)$ are given by \eqref{def.A_k} and \eqref{def.a_k}.
\begin{itemize}
\item[(a)]  If there is $\delta \in (0,1)$ such that  $|A_j| \geq \delta$  for every $j = 0,1, \dots k$ where $k\geq 1$, then  it holds
\begin{equation} \label{decay_alku} 
|A_k|\leq |B_{\frac{5}{3}}| - c\left( \sum_{j=0}^{k-1}a_j \right)^n 
\end{equation}
for a  constant $c>0$ which depends  on $\delta$.
\item[(b)] If there is $k_0 \in \N$ such that $a_j^n \leq |A_j| \leq  \frac{1}{2n^n}$  for every $j = k_0,k_0 + 1, k_0 +2, \dots, k$ for $k > k_0$, then  it holds
\begin{equation} \label{decay_loppu} 
|A_{ k}| \leq  \frac{1}{n^n}\left(1- c \sum_{j=k_0}^{k-1}a_j \right)^n
\end{equation}
for a  universal constant $c>0$. 
\end{itemize}
\end{lemma}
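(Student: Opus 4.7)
The plan is to iterate Lemma \ref{lemma_perus} directly, working with the complementary volume $V_k := |B_{\frac{5}{3}}| - |A_k|$ and the partial sum $S_k := \sum_{j=0}^{k-1} a_j$. For part (a) I would set $y_k := V_k^{1/n}$ and prove by induction that $y_k \geq c(\delta)\, S_k$ (equivalently, $V_k \geq c\, S_k^n$); for part (b) I would set $w_k := |A_k|^{1/n}$ and derive a linear decay $w_{k+1} \leq w_k - (c_0/n)\, a_k$, then telescope. Both arguments rest on recognising the ODE-like recursion buried inside Lemma \ref{lemma_perus} and then applying a Bernoulli/concavity-type inequality for $t \mapsto t^{1/n}$.

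For part (a), the hypothesis $|A_j| \geq \delta$ together with $a_j \leq 1/L \leq 1$ bounds the last two terms in the minimum of Lemma \ref{lemma_perus} from below by $c(\delta)\, a_k$, so the recursion splits into two regimes. When $y_k \geq \delta^{1/n}$ (bounded-below regime), the middle or the last term realises the minimum, yielding $V_{k+1} - V_k \geq c(\delta)\, a_k$; the trivial bound $V_{k+1} \leq |B_{\frac{5}{3}}|$ combined with the concavity of $t \mapsto t^{1/n}$ then gives $y_{k+1} - y_k \geq c'(\delta)\, a_k$ directly. When $y_k < \delta^{1/n}$ (small-$V_k$ regime), the minimum is $c_0 a_k y_k^{n-1}$ and the recursion reads $y_{k+1}^n \geq y_k^n + c_0\, a_k\, y_k^{n-1}$; the elementary expansion $(y_k + x)^n - y_k^n \leq (2^n - 1)\, y_k^{n-1}\, x$ valid for $x \leq y_k$ then shows that $y_{k+1} \geq y_k + (c_0/(2^n - 1))\, a_k$ whenever $(c_0/(2^n-1))\, a_k \leq y_k$, which is enough to telescope to $y_k \geq c\, S_k$.

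For part (b), the two assumptions $a_j^n \leq |A_j| \leq 1/(2n^n)$ simplify the minimum in Lemma \ref{lemma_perus} decisively: the inequality $|A_j| \leq |B_{\frac{5}{3}}|/2 \leq V_j$ gives $|A_j|^{(n-1)/n} \leq V_j^{(n-1)/n}$, while $a_j \leq |A_j|^{1/n} = w_j$ gives $a_j\, |A_j|^{(n-1)/n} = a_j\, w_j^{n-1} \leq w_j^n = |A_j|$. Hence the middle term $c_0\, a_j\, |A_j|^{(n-1)/n}$ realises the minimum and the recursion reads
\[
|A_{j+1}| \leq |A_j|\bigl(1 - c_0\, a_j/w_j\bigr).
\]
Taking $n$-th roots and applying the concave inequality $(1-x)^{1/n} \leq 1 - x/n$, valid for $x \in [0,1]$, to $x = c_0\, a_j/w_j \leq c_0 \leq 1$ yields $w_{j+1} \leq w_j - (c_0/n)\, a_j$. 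Telescoping from $j = k_0$ up to $j = k-1$ and using the initial bound $w_{k_0} \leq (1/(2n^n))^{1/n} \leq 1/n$ completes part (b) with $c = c_0$.

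The main obstacle is the transition through the small-$V_k$ regime in part (a): the Bernoulli-type step above requires $(c_0/(2^n-1))\, a_k \leq y_k$, which may fail when $y_k$ is very small compared to $a_k$ (in the extreme case $V_0 = 0$, compatible with the hypothesis $|A_0| = |B_{\frac{5}{3}}|$, the bare recursion yields no progress at all). Some care is therefore required to secure an adequate quantitative lower bound on $y_1$ — either by a separate application of the decay estimate (Lemma \ref{decay_est}) or by exploiting the second and third terms of the minimum in Lemma \ref{lemma_perus} more carefully — before the telescoping argument above can be launched; this is the step in which the $\delta$-dependence of the final constant $c$ enters.
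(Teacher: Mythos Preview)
Your part (b) is correct and essentially the same as the paper's argument; passing to $w_k = |A_k|^{1/n}$ and telescoping $w_{j+1} \leq w_j - (c_0/n)\,a_j$ is if anything a little cleaner than the paper's direct induction on $|A_k|$.

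For part (a) your strategy is the right one --- including the separate treatment of the base case via Lemma~\ref{decay_est}, which is exactly what the paper does to obtain $V_1 \geq c\,a_0^n$ --- but the induction step has a genuine gap. In the small-$V_k$ regime your Bernoulli step needs the side condition $c'\,a_k \leq y_k$ (with $c' = c_0/(2^n-1)$), and securing $y_1$ is \emph{not} enough to propagate it: from $y_{k+1} \geq y_k + c'\,a_k$ alone you cannot deduce $c'\,a_{k+1} \leq y_{k+1}$, because nothing in your argument yet controls $a_{k+1}$ in terms of $y_{k+1}$ or $S_{k+1}$. The missing ingredient is the purely structural bound
\[
\frac{a_k}{a_{k-1}} \leq L, \qquad \text{hence} \qquad a_k \leq L\,S_k \quad \text{for } k \geq 1,
\]
which follows immediately from the monotonicity of $\phi$ in the definition \eqref{def.a_k}. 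The paper derives this explicitly and uses it in the equivalent form $(S_k + a_k)^n \leq S_k^n + N\,a_k\,S_k^{n-1}$; inducting on $V_k \geq (\tilde c/N)^n S_k^n$ then closes the recursion $V_{k+1} \geq V_k + \tilde c\,a_k\,V_k^{(n-1)/n}$ with no side condition at all. Your $y_k$-version can be repaired the same way once you have $a_k \leq L S_k$, by taking the induction constant $\beta$ small enough relative to $L$ (so that $\beta\,a_k/y_k \leq a_k/S_k \leq L$ replaces your hypothesis $\beta\,a_k/y_k \leq 1$). Incidentally, the two-regime split is unnecessary: under $|A_j| \geq \delta$ all three terms in the minimum of Lemma~\ref{lemma_perus} are bounded below by $\tilde c(\delta)\,a_k\,V_k^{(n-1)/n}$ (using $V_k \leq |B_2|$ and $a_k \leq 1$ for the last two), so the single recursion above holds throughout, which is how the paper proceeds.
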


\begin{proof}
Let us first prove (a). Since $|A_k|\geq  \delta$, we may use  Lemma \ref{lemma_perus} to deduce
\begin{equation}
\label{isoper_alku}
|A_k \setminus A_{k+1}|  \geq \tilde{c} a_k  \left(|B_{\frac{5}{3}}|-|A_k|\right)^{\frac{n-1}{n}}
\end{equation}
for $\tilde{c} = \frac{c_0 \delta}{|B_2|}$, where $c_0$ is the constant from Lemma \ref{lemma_perus}. Moreover, we may assume that $\tilde{c}  \leq \mu$, where $\mu$ is from Lemma \ref{decay_est}, by possibly decreasing $\tilde{c}$.

We  make  a few observations on the sequence   $(a_k)$ defined in \eqref{def.a_k}. First of all, since $\phi$ is increasing we have
\[
\frac{a_{k}}{a_{k-1}} = L \frac{R\phi(L^{k-1} m)+ L^{k-1} m}{R\phi(L^{k} m)+ L^{k} m} \leq L
\]
for every $k = 1,2, 3, \dots$. In particular, it holds 
\[
a_k \leq L \sum_{j=0}^{k-1}a_j
\]
for every $k = 1,2,3, \dots$. Therefore we find $N>0$ such that 
\begin{equation}
\label{helppo_monotonisuus}
\begin{split}
\left(\sum_{j=0}^{k-1}a_j   + a_k \right)^n &= \sum_{i=0}^n  \binom{n}{i}  \left(\sum_{j=0}^{k-1}a_j \right)^{n-i}a_k^i \\
&\leq \left(\sum_{j=0}^{k-1}a_j  \right)^n + N a_k \left(\sum_{j=0}^{k-1}a_j  \right)^{n-1}.
\end{split}
\end{equation}

Let us prove the claim by induction for   $c_1 = \left( \frac{\tilde{c}}{N} \right)^n$, where $\tilde{c}$ is the constant from \eqref{isoper_alku}.  We begin by observing that the only information from the set  $A_0 = \{x \in B_{\frac{5}{3}} : u(x)>m \}$ is that there is a point $\hat{x} \in \bar{B}_1$ such that $u(\hat{x})= m$.  Let us  choose $x_0 \in B_1$ such that $\hat{x} \in \bar{B}_{a_0}(x_0) \subset \bar{B}_1$. Since $a_0 \leq \frac{1}{6}$ we have $B_{2a_0}(x_0) \subset  B_{\frac{5}{3}}$. We argue as in the previous lemma and rescale $u$ by
\[
\tilde{u}(x)= \frac{u(a_0x + x_0)}{m}.
\] 
Then $\tilde{u}$ is nonnegative in $B_2$, $\inf_{B_1} \tilde{u} \leq 1$ and by Lemma \ref{skaalaus} it  is a supersolution of \eqref{model1} in $B_2$. We may thus apply Lemma \ref{decay_est} to conclude that
\[
\begin{split}
\frac{|B_{\frac{5}{3}}  \setminus A_{1}|}{|B_{2a_0} | } &\geq  \frac{|\{ x \in B_{2a_0}(x_0) : u(x) \leq L m\}|  }{|B_{2a_0}| }\\
&=  |\{ y \in B_{2} : \tilde{u}(y) \leq L \}| \\
&\geq \mu.
\end{split}
\]
Therefore we have
\[
\begin{split}
|A_1| &\leq |B_{\frac{5}{3}} | -\mu |B_{2a_0} |  \\
&\leq |B_{\frac{5}{3}} | - \left( \frac{\tilde{c}}{N} \right)^n  a_0^n
\end{split}
\]
where the last inequality follows from  $\tilde{c} \leq \mu$. Hence, the claim  holds for $k=1$.

We assume that the claim holds for $k >1$, i.e., 
\begin{equation} \label{induktio_alku} 
|A_k|\leq |B_{\frac{5}{3}} | - \left( \frac{\tilde{c}}{N} \right)^n \left( \sum_{j=0}^{k-1}a_j \right)^n. 
\end{equation}
We have by  \eqref{isoper_alku},  \eqref{helppo_monotonisuus}  and \eqref{induktio_alku}  that
\[
\begin{split}
|A_{k+1}|  &\leq  |A_k| - \tilde{c} a_k \left( |B_{\frac{5}{3}}|-  | A_k|\right)^{\frac{n-1}{n}} \\
&\leq |B_{\frac{5}{3}}| - \left( \frac{\tilde{c}}{N} \right)^n \left(  \left(  \sum_{j=0}^{k-1}a_j \right)^n + Na_k   \left(  \sum_{j=0}^{k-1}a_j \right)^{n-1}\right)\\
&\leq |B_{\frac{5}{3}}| - \left( \frac{\tilde{c}}{N} \right)^n \left(  \sum_{j=0}^{k}a_j \right)^n,
\end{split}
\]
which proves \eqref{decay_alku}.

We now prove (b). In this case  Lemma \ref{lemma_perus} implies  
\[
|A_k \setminus A_{k+1}|  \geq  c_0 a_k|A_k|^{\frac{n-1}{n}}
\]
for all $k \geq k_1$.  In other words
\begin{equation} \label{isoper_loppu} 
|A_{k+1}| \leq |A_k| - c_0 a_k|A_k|^{\frac{n-1}{n}}.
\end{equation}
By possibly decreasing $c_0$ we may assume that $c_0 \leq 1 - 2^{-1/n}$. Therefore it follows from $a_{k_1}\leq \frac{1}{6}$ that
\[
|A_{k_1+1}| \leq \frac{1}{2n^n} \leq \frac{1}{n^n} (1 -  c_0  a_{k_1} )^{n}.
\]
Hence the claim holds for $k=k_1+1$.

Let us assume that the claim holds for $k > k_1+1$, i.e., 
\begin{equation} 
\label{induktio-oletus}
|A_{ k}| \leq  \frac{1}{n^n}\left(1- c_0 \sum_{j=k_1}^{k-1}a_j \right)^n.
\end{equation}
Notice  that the assumption $|A_{ k}|\geq a_{k}^n$ implies 
\[
c_0 a_{k}\leq |A_{ k}|^{\frac{1}{n}} \leq  \frac{1}{n}\left(1-  c_0 \sum_{j= k_1}^{ k-1}a_j \right).
\]
Next we remark that if  there are positive numbers $a$ and $b$ such that  $b\leq\frac{1}{n} a$, then it holds
\[
(a- b)^n =  \sum_{i=0}^n \binom{n}{i} a^{n-i} (-b)^i \geq a^n - na^{n-1}b.
\]
The two previous inequalities  yield
\begin{equation}
\label{binomi}
\left((1- c_0 \sum_{j= k_1}^{k-1}a_j) - c_0 a_k \right)^n \geq  \left(1- c_0 \sum_{j=k_1}^{k- 1}a_j \right)^n -  n c_0 a_k \left(1-  c_0 \sum_{j=k_1}^{k-1}a_j \right)^{n-1}.
\end{equation}

Notice that the function  $t \mapsto t - c_0 a_k t^{\frac{n-1}{n}}$ is increasing in $[a_k^n, 1]$.  Since $|A_k| \geq a_k^n$ we have by \eqref{isoper_loppu}  and \eqref{induktio-oletus} that
\[
\begin{split}
|A_{k+1}| &\leq  |A_k| -  c_0a_k|A_k|^{\frac{n-1}{n}}\\
 &\leq  \frac{1}{n^n} \left( \left(1- c_0 \sum_{j=k_1}^{k-1}a_j \right)^n -    n  c_0 a_k \left(1-  c_0 \sum_{j=k_1}^{k-1}a_j \right)^{n-1} \right) \\
&\leq  \frac{1}{n^n}  \left(1- c_0 \sum_{j=k_1}^{k}a_j \right)^n,
\end{split}
\]
where the last inequality follows from \eqref{binomi}. 
\end{proof}

The next  lemma asserts that when the level sets $A_k$ are very small they finally decay as in the homogeneous case.  Roughly speaking  this means that the asymptotic behaviour of an unbounded supersolution of \eqref{model1} is completely determined by the second order operator, not the lower order drift term.  

\begin{lemma}
\label{lemma_huippu2}
Let  $u$ be  a positive supersolution of \eqref{model1.1} for $R\leq 1$ in $B_2$, and suppose  the sets $A_k \subset B_{\frac{5}{3}}$ and the numbers $a_k \in (0,1)$  are given by \eqref{def.A_k} and  \eqref{def.a_k}. Let the constant $c_0$ be as in Lemma \ref{lemma_perus} and denote $m = \inf_{B_1} u$. There is a universal constant $C$ such that either $\sum_{j=0}^{\infty} a_j \leq C$  or there is  an index $k_1 \in \N$ such that 
\begin{equation}
\label{summa-raja}
\sum_{j=0}^{k_1} a_j \leq C \qquad \text{and} \qquad |A_{k}| \leq (1-c_0) ^{k-k_1}a_{k_1}^n, \quad k \geq k_1.
\end{equation}
In the latter case we have 
\begin{equation}
\label{epsilon_muoto}
|\{ x \in B_{\frac{5}{3}} : u(x)> t L^{k_1}m\}|\leq c_{\eps} t^{-\eps} a_{k_1}^n \qquad \text{for }\,  t \geq 1
\end{equation}
where $c_{\eps}$ and $\eps>0$ are  universal constants.
\end{lemma}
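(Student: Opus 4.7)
The plan is a case analysis on how the measures $|A_k|$ decay, matched to the three regimes that select the minimum on the right-hand side of Lemma \ref{lemma_perus}. If $|A_j| \geq \frac{1}{2n^n}$ for every $j$, Lemma \ref{lemma_alku}(a) with $\delta = \frac{1}{2n^n}$ yields
\[
c\left(\sum_{j=0}^{k-1} a_j \right)^n \leq |B_{\frac{5}{3}}| - |A_k| \leq |B_{\frac{5}{3}}|
\]
uniformly in $k$, so $\sum_{j=0}^{\infty} a_j \leq C_1$ and we are in the first alternative of the lemma.

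Otherwise, let $k_*$ be the smallest index with $|A_{k_*}| < \frac{1}{2n^n}$, so that the previous argument already bounds $\sum_{j=0}^{k_*-1} a_j$. By nesting, $|A_j| \leq |A_{k_*}| < \frac{1}{2n^n}$ for every $j \geq k_*$, so Lemma \ref{lemma_alku}(b) becomes available as long as $|A_j| \geq a_j^n$. If this bound persists for every $j \geq k_*$, Lemma \ref{lemma_alku}(b) forces $\sum_{j=k_*}^\infty a_j \leq 1/c_0$ and again we are in the first alternative. Otherwise, define $k_1$ as the smallest index $\geq k_*$ with $|A_{k_1}| \leq a_{k_1}^n$; applying Lemma \ref{lemma_alku}(b) on $[k_*, k_1-1]$ bounds $\sum_{j=k_*}^{k_1-1}a_j$ by $1/c_0$, and combining with $a_{k_1} \leq 1/L$ yields $\sum_{j=0}^{k_1}a_j \leq C_1$, producing the index $k_1$ promised by the lemma.

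It remains to prove $|A_k| \leq (1-c_0)^{k-k_1} a_{k_1}^n$ for $k \geq k_1$ by iterating Lemma \ref{lemma_perus}. Once $|A_k| \leq a_k^n$ and $|A_k| \leq |B_{\frac{5}{3}}|/2$ hold, the three quantities in the minimum satisfy $a_k|B_{\frac{5}{3}} \setminus A_k|^{(n-1)/n} \geq a_k|A_k|^{(n-1)/n} \geq |A_k|$, so the minimum equals $|A_k|$ and Lemma \ref{lemma_perus} delivers the contraction $|A_{k+1}| \leq (1-c_0)|A_k|$. The main obstacle is propagating $|A_k| \leq a_k^n$ from step $k$ to step $k+1$, which amounts to $a_{k+1}/a_k \geq (1-c_0)^{1/n}$; I would verify this using condition (P3), which gives $a_{k+1}/a_k \geq 1/(\Lambda_0 \eta(L))$, and then adjust the universal constants $L$ and $c_0$ so that $(1-c_0)^{1/n} \leq 1/(\Lambda_0 \eta(L))$ holds. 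Induction from $|A_{k_1}| \leq a_{k_1}^n$ then yields the desired bound.

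Finally \eqref{epsilon_muoto} follows by interpolation: for $t \geq 1$ set $j = \lfloor \log_L t \rfloor \geq 0$, so that $\{x \in B_{\frac{5}{3}} : u(x) > t L^{k_1} m\} \subseteq A_{k_1+j}$ has measure at most $(1-c_0)^j a_{k_1}^n = L^{-\eps j} a_{k_1}^n \leq c_\eps t^{-\eps} a_{k_1}^n$, where $\eps = \log_L\bigl(1/(1-c_0)\bigr)$ and $c_\eps$ absorbs the factor $(1-c_0)^{-1}$ coming from the discretization.
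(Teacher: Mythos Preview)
Your overall architecture matches the paper's: use Lemma~\ref{lemma_alku}(a), then (b), to produce $k_1$ with $|A_{k_1}|\leq a_{k_1}^n$, then iterate Lemma~\ref{lemma_perus} to get geometric decay. The final interpolation to \eqref{epsilon_muoto} is fine.

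The genuine gap is in the inductive propagation of $|A_k|\leq a_k^n$. You correctly identify that this requires $a_{k+1}/a_k\geq (1-c_0)^{1/n}$, but your proposed fix---use (P3) to get $a_{k+1}/a_k\geq 1/(\Lambda_0\eta(L))$ and then ``adjust $L$ and $c_0$''---cannot work. Both $c_0$ (coming from the covering argument in Lemma~\ref{lemma_perus}) and $L=\max\{L_1,L_2,6\}$ are already fixed by earlier lemmas and are not free parameters here. Even if they were, the adjustments go the wrong way: decreasing $c_0$ pushes $(1-c_0)^{1/n}$ \emph{up}, and increasing $L$ pushes $1/(\Lambda_0\eta(L))$ \emph{down}. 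Since $\Lambda_0\eta(L)$ is typically much larger than $1$ while $(1-c_0)^{1/n}$ is close to $1$, the inequality $(1-c_0)^{1/n}\leq 1/(\Lambda_0\eta(L))$ simply fails in general.

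What the paper does instead is exploit the slowly varying property (Proposition~\ref{prop_hitaasti}(i)), which gives $\eta(Lt)/\eta(t)\to 1$ as $t\to\infty$; this is much sharper than the crude bound from (P3). One fixes a threshold $t_L$ so that $\eta(Lt)/\eta(t)\leq (1-c_0)^{-1/n}$ for all $t\geq t_L$, and then the induction you wrote runs cleanly provided $L^{k_1}m\geq t_L$ (Case~1). When $L^{k_1}m<t_L$ (Case~2), the ratio $a_{k+1}/a_k$ can genuinely be small for the finitely many steps where $L^km$ lies in the window $[L^{-1},t_L]$, and the paper handles this by choosing the threshold $\delta$ in the first application of Lemma~\ref{lemma_alku}(a) small enough (depending on $\sup_{[L^{-1},t_L]}\eta$) so that $|A_{k_1}|$ is already tiny enough to survive those bad steps. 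Your choice $\delta=\tfrac{1}{2n^n}$ does not build in this safety margin, so even with the correct asymptotic argument your setup would still need this additional case.
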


Before the proof I would like to point out that there is no bound for the index $k_1$. The point  is that only the sum \eqref{summa-raja} is uniformly bounded.

\begin{proof}
Let us begin with a few preparations. Let $c_0$ and $L$ be the constants from Lemma \ref{lemma_perus}.  By Proposition \ref{prop_hitaasti} (i) there is a number $t_L\geq 1$ such that 
\begin{equation}
\label{kaukana}
\sup_{t \geq t_L}\frac{\eta(Lt) + R^{-1}}{\eta(t) + R^{-1}} \leq \sup_{t \geq t_L}\frac{\eta(Lt) }{\eta(t)} \leq \left(\frac{1}{(1-c_0)}\right)^{1/n}.
\end{equation}
Moreover,  Proposition \ref{prop_hitaasti} (ii) implies that   
\begin{equation}
\label{kaukana2}
\delta_0 := \inf_{j \in \N}  \left(  \frac{(1-c_0)^{-j}}{(R\eta(L^jt_L)+1)^n}\right)  >0.
\end{equation}

We begin by using Lemma \ref{lemma_alku} (a) for $\delta= \min \{\frac{\delta_0}{L^n},  \frac{1}{2n^n} \}$, where $\delta_0$ is defined in \eqref{kaukana2}. We conclude that there is  $C_0$, which depends only on $\delta$, such that either $\sum_{j=0}^{\infty} a_j \leq C_0$ or there is   $k_0 \in \N$ such that 
\[
\sum_{j=0}^{k_0-1} a_j \leq C_0 \qquad \text{and} \qquad |A_{k}| \leq   \delta
\]
 for every $k \geq k_0$. In the latter case Lemma \ref{lemma_alku} (b)   implies  that there is $C_1$ such that either $\sum_{j=k_0}^{\infty} a_j \leq C_1$ or there   is $k_1 > k_0$ such that 
 \[
\sum_{j=k_0}^{k_1-1} a_j \leq C_1  \qquad \text{and} \qquad|A_{k_1}| \leq  a_{k_1}^n.
\]
For every $k \geq k_1$ Lemma \ref{lemma_perus} gives
\begin{equation} \label{isoper_huippu2}
|A_k \setminus A_{k+1}| \leq c_0  \min \{  a_k|A_k|^{\frac{n-1}{n}} , |A_k|\}.
\end{equation}
In particular, we have 
\begin{equation} \label{huippu3}
|A_{k_1+1}| \leq  (1-c_0)|A_{k_1}|.
\end{equation}
 We  divide the rest of the proof into two cases.

 \textbf{Case 1:  $L^{k_1}m \geq t_L$}. 

In this case it follows from \eqref{kaukana} that 
\begin{equation} \label{kasvuehto2}
\frac{a_{k}}{a_{k+1}}= \frac{\eta(L^{k +1}m) + R^{-1}}{\eta(L^{k}m)+ R^{-1}} \leq \left(\frac{1}{(1-c_0)}\right)^{1/n}
\end{equation}
for every $k\geq k_1$. I claim that it holds 
\begin{equation} \label{huippu_iteraatio2}
|A_{k+1}| \leq  (1-c_0)|A_{k}|  \qquad \text{and} \qquad |A_{k}| \leq a_{k}^n
\end{equation}
for every $k \geq k_1$, which yields the claim. 

Indeed, \eqref{huippu3} implies that \eqref{huippu_iteraatio2} holds for $k=k_1$.  Assume that \eqref{huippu_iteraatio2} holds for $k >k_1$.  It follows from the induction assumption and \eqref{kasvuehto2} that
\[
|A_{k+1}| \leq  (1-c_0)|A_{k}| \leq (1-c_0)a_{k}^n \leq a_{k+1}^n.
\]
Hence, \eqref{isoper_huippu2} yields 
\[
|A_{k+2}| \leq  (1-c_0)|A_{k+1}|
\]
and  \eqref{huippu_iteraatio2} follows.

\bigskip
 \textbf{Case 2:  $L^{k_1}m \leq t_L$}. 
Let us prove  that also  in this case we have  
\begin{equation} \label{huippu_iteraatio3}
|A_{k}| \leq  (1-c_0)^{k-k_1}|A_{k_1}| 
\end{equation}
for $k > k_1$. Again \eqref{huippu3} implies that the claim holds for $k = k_1+1$.  Moreover, let us  recall that we have  
\[
|A_{k_1}| \leq  \delta \leq \frac{\delta_0}{L^n}
\]
where $\delta_0$ is given by \eqref{kaukana2}.

Assume \eqref{huippu_iteraatio3} is true for $k > k_1+1$. Assume first that    $L^{k}m <1$. Since $\eta$ is nonincreasing in $(0,1)$ we have
\[
a_{k}= \frac{1}{L}\frac{1}{R\eta(L^{k}m)+ 1} \geq  \frac{1}{L}\frac{1}{R\eta(L^{k_1}m)+ 1} = a_{k_1} \geq |A_{k_1}|^{1/n} \geq |A_{k}|^{1/n}. 
\]
Therefore \eqref{isoper_huippu2} gives 
\[
|A_{k+1}| \leq  (1-c_0)|A_{k}| \leq  (1-c_0)^{k+1-k_1}|A_{k_1}|. 
\]
On the other hand, if $L^{k}m \geq 1$ the assumption $L^{k_1}m\leq t_L$ yields
\[
\eta(L^{k}m) \leq \eta(L^{k-k_1}t_L)
\]
since $\eta$ is nondecreasing in $[1, \infty)$. We use  \eqref{kaukana2} to deduce that 
\[
\begin{split}
a_{k}^n &\geq \frac{1}{L^n}\frac{1}{(R\eta(L^{k-k_1}t_L) +1)^n }  \\
&\geq (1-c_0)^{k-k_1} \frac{\delta_0}{L^n}\\
&\geq (1-c_0)^{k-k_1}|A_{k_1}| \\
&\geq |A_{k}|,
\end{split}
\]
where the last inequality follows from the induction assumption.  Hence,  \eqref{isoper_huippu2} yields
\[
|A_{k+1}| \leq  (1-c_0)|A_{k}| \leq  (1-c_0)^{k+1 -k_1}|A_{k_1}|
\]
which proves \eqref{huippu_iteraatio3}.

\end{proof}

Theorem \ref{weakHarnack} follows from Lemma \ref{lemma_huippu2} and from the following result  which is  similar to the one in \cite{CC}.

\begin{lemma}
\label{caffarelli}
Let  $u \in C(B_2)$ be a supersolution of \eqref{model1.1} and a subsolution of  \eqref{model2.2} for $R \leq 1$ in $B_2$. Suppose that  $A_k \subset B_{\frac{5}{3}}$, $a_k \in \R$ and $L$ are as in Lemma \ref{lemma_perus}, and assume that  \eqref{summa-raja}  in   Lemma \ref{lemma_huippu2} holds for an  index $k_1$. Denote $m = \inf_{B_1} u$. There are universal constants  $L_0$ and $\sigma$ such that for $\nu= \frac{L_0}{L_0 - 1/2}$ the following holds: if $x_0 \in B_{4/3}$ and $l \in \N$ are such that 
\[
u(x_0) \geq \nu^{l}L_0 L^{k_1}m
\]
then it holds
\[
\sup_{B_{r_l}(x_0)} u >   \nu^{l+1}L_0 L^{k_1}m
\]
where  $r_l  = \sigma \nu^{- (l+1)\eps /n} (L_0/2)^{- \eps /n} a_{k_1}$. Here $\eps>0$ is from \eqref{epsilon_muoto}.
\end{lemma}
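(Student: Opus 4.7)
The plan is to argue by contradiction and play a rescaled version of Lemma \ref{decay_est} off against the $\eps$-decay \eqref{epsilon_muoto}. Suppose $\sup_{B_{2r_l}(x_0)} u < \nu^{l+1}L_0 L^{k_1}m$. I would introduce the dual function
\[v(y) := \nu^{l+1}L_0 L^{k_1}m - u(y),\qquad y \in B_{2r_l}(x_0),\]
which is nonnegative and, because $u$ is a viscosity subsolution of \eqref{model2.2}, is a supersolution of \eqref{model1.1}. The identity $L_0(\nu-1)=\nu/2$ built into the choice $\nu = L_0/(L_0 - 1/2)$, together with the hypothesis $u(x_0)\geq\nu^l L_0 L^{k_1}m$, yields $v(x_0)\leq A$ with $A:=\tfrac12\nu^{l+1}L^{k_1}m$.

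Next I would rescale $\tilde v(z):=v(r_l z+x_0)/A$ on $B_2$, so that $\tilde v\geq 0$ and $\inf_{B_1}\tilde v\leq \tilde v(0)\leq 1$. The central technical check is the hypothesis of Lemma \ref{skaalaus}, namely
\[r_l \leq \frac{1}{L_2(R\eta(A)+1)}.\]
Splitting $\eta(A)\leq \Lambda_0\eta(\tfrac12\nu^{l+1})\eta(L^{k_1}m)$ with (P3) and bounding $\eta(\tfrac12\nu^{l+1})\leq C_\gamma\nu^{(l+1)\gamma}$ via Proposition \ref{prop_hitaasti}(ii), combined with $a_{k_1}^{-1}=L(R\eta(L^{k_1}m)+1)$, the definition of $r_l$ yields
\[r_l(R\eta(A)+1)\leq \frac{\sigma C_\gamma\Lambda_0}{L}(L_0/2)^{-\eps/n}\nu^{-(l+1)(\eps/n - \gamma)},\]
which is $\leq 1/L_2$ once one fixes $\gamma<\eps/n$ and keeps $\sigma$ below a universal multiple of $L(L_0/2)^{\eps/n}/L_2$. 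Granted this, Lemma \ref{skaalaus} turns $\tilde v$ into a supersolution of \eqref{model1.1} with $R=1$, and Lemma \ref{decay_est} produces $|\{\tilde v\leq L_1\}\cap B_2|\geq \mu$, which pulls back to
\[\bigl|\{y\in B_{2r_l}(x_0) : u(y)\geq \nu^{l+1}L^{k_1}m(L_0 - L_1/2)\}\bigr|\geq \mu\, r_l^n|B_2|.\]

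Fixing $L_0\geq 2L_1$ forces $L_0 - L_1/2\geq L_0/2$, and since $x_0\in B_{4/3}$ and $r_l< \sigma a_{k_1}\leq \sigma/L<1/6$ we have $B_{2r_l}(x_0)\subset B_{5/3}$; the decay estimate \eqref{epsilon_muoto} applied with $t=(L_0/2)\nu^{l+1}\geq 1$ then yields the upper bound
\[\bigl|\{u\geq (L_0/2)\nu^{l+1}L^{k_1}m\}\cap B_{5/3}\bigr|\leq c_\eps(L_0/2)^{-\eps}\nu^{-(l+1)\eps}a_{k_1}^n.\]
Comparing the two measure bounds gives $\mu\,r_l^n|B_2|\leq c_\eps(L_0/2)^{-\eps}\nu^{-(l+1)\eps}a_{k_1}^n$, i.e.\ $\sigma^n\leq c_\eps/(\mu|B_2|)$. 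Selecting $\sigma$ so that $\sigma^n>c_\eps/(\mu|B_2|)$ produces the contradiction and completes the proof.

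The main obstacle is the tension between the two constraints on $\sigma$: the scaling step forces $\sigma$ small (to absorb the factor $\eta(A)/\eta(L^{k_1}m)$ generated by $\nu^{l+1}$), while the measure comparison demands $\sigma$ large. Reconciling them relies on Proposition \ref{prop_hitaasti}(ii), which delivers a sub-polynomial bound $\eta(\tfrac12\nu^{l+1})\leq C_\gamma\nu^{(l+1)\gamma}$ with $\gamma$ arbitrarily small, so the $\nu^{-(l+1)\eps/n}$ decay wired into $r_l$ strictly dominates the $\nu^{(l+1)\gamma}$ growth; one then fixes $L_0$ large enough that the upper and lower bounds on $\sigma$ are simultaneously admissible.
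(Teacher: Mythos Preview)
Your proposal is correct and follows essentially the same strategy as the paper: argue by contradiction, pass to the nonnegative function $v = \nu^{l+1}L_0L^{k_1}m - u$ (the paper builds this rescaling directly into its definition of $v$, but your two-step version is equivalent since $(\nu-1)\nu^l L_0 = \tfrac12\nu^{l+1}$), invoke Lemma~\ref{skaalaus} via (P3) and Proposition~\ref{prop_hitaasti}(ii) to justify applying Lemma~\ref{decay_est}, and then compare the resulting lower bound against the $\eps$-decay \eqref{epsilon_muoto} to force a contradiction on $\sigma$. Two cosmetic points: the scaling of Lemma~\ref{decay_est} gives $\mu\,r_l^n$, not $\mu\,r_l^n|B_2|$ (this only shifts the threshold for $\sigma$ by a harmless factor), and the inclusion $B_{2r_l}(x_0)\subset B_{5/3}$ should be justified by taking $L_0$ large so that $(L_0/2)^{-\eps/n}\sigma\leq 1$, rather than by assuming $\sigma<1$.
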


The  proof of the previous lemma  can be found in  Appendix. We now give the  proof of  the main result. 

\begin{proof}[Proof of Theorem \ref{weakHarnack}]
Let us assume that $u \in C(B_2)$ is a  supersolution of \eqref{model1.1} and a subsolution of \eqref{model2.2}  in $B_2$ for $R \leq 1$. By Remark \ref{triviaali_ska} we need to show that 
\[
\int_m^M \frac{dt}{R\phi(t)+ t} \leq C,
\]
where $m = \inf_{B_1}u$ and $M= \sup_{B_1}u$.  Since $a_k$ are given by \eqref{def.a_k} and $\phi$ is increasing  we have  
\begin{equation} \label{summa=int}
 \int_{m}^{L^k m}\frac{dt}{R\phi(t)+t} \leq  L \sum_{j=0}^{k-1} \frac{L^jm}{R\phi(L^jm )+ L^jm}  = L\sum_{j=0}^{k-1} a_j
\end{equation}
for every $k = 1,2,3, \dots$. Let $C$ be the constant from Lemma \ref{lemma_huippu2}. If 
\[
\sum_{j=0}^{\infty} a_j \leq C 
\]
the claim is trivially true. Let us treat the case when we have   \eqref{summa-raja}, i.e.,  there is an index $k_1$ such that 
\[
 \sum_{j=0}^{k_1-1} a_j \leq C
\]
where $C$ is a uniform constant and 
\[
|A_{k_1+l}| \leq (1-c_0) ^{l}a_{k_1}^n, \quad l =0,1,2,\dots.
\]
Suppose that $r_l = \sigma\nu^{- \eps l /n} L_0^{-\eps/n} a_{k_1}$ are as in Lemma \ref{caffarelli}. Then there is a uniform index $l_0$ such that 
\[
\sum_{j= l_0}^{\infty} r_j \leq 1/3.
\]
I claim that it holds 
\begin{equation} \label{final_claim}
\sup_{B_{1}} u \leq \nu^{l_0}L_0 L^{k_1}m.
\end{equation}
Indeed, if this were not true there would be a point $x_{l_0} \in B_{1}$ such that 
\[
u(x_{l_0}) \geq \nu^{l_0}L_0 L^{k_1}m.
\]
By Lemma \ref{caffarelli}  there is $x_{l_0+1} \in B_{r_{l_0+1}}(x_{l_0})$ such that 
\[
u(x_{l_0+1}) \geq \nu^{l_0+1}L_0 L^{k_1}m.
\]
We may repeat this process since at every step we have $|x_{j+1}- x_{j}| \leq r_j$ and therefore for every $l >l_0$ it holds
\[
|x_l| \leq |x_{l_0}|+ \sum_{j =l_0 }^{l-1}|x_{j+1}- x_{j}| < 1+  \sum_{j= l_0}^{\infty} r_j \leq \frac{4}{3}.
\]
Hence, $u(x_l) \geq  \nu^{l}L_0 L^{k_1}m$ and $x_l \in B_{4/3}$ for every $l > l_0$. This implies that $u$ is unbounded in $\bar{B}_{4/3}$  which contradicts  the continuity of $u$  in $B_2$.

From \eqref{summa=int} and \eqref{final_claim} we deduce 
\[
\begin{split}
 \int_{m}^{M}\frac{dt}{R\phi(t)+ t} &\leq  \int_{L^{k_1}m}^{ \nu^{l_0}L_0 L^{k_1}m} t^{-1}\, dt  +  \int_{m}^{L^{k_1}m}\frac{dt}{R\phi(t)+ t} \\
&\leq  \log \left(  \nu^{l_0} L_0 \right)+ L\sum_{j=0}^{k_1-1} a_j\\
&\leq \log \left(  \nu^{l_0} L_0 \right) + LC
\end{split}
\]
and the result follows. 
\end{proof}

We conclude the section with  a proof of Corollary \ref{holder}.

\begin{proof}[Proof of  Corollary \ref{holder}]
 Let us  show that if $v \in C(B_{R_0}(x_0))$  is a nonnegative  viscosity supersolution of \eqref{model1} and a  subsolution of \eqref{model2} in $B_{R_0}(x_0)$ such that $ \sup_ {B_{R_0}(x_0)} v  \leq M_0$, then there is $\hat{R} \leq R_0/2$ depending on $M_0$ such that for every $R \leq \hat{R}$ it holds
\begin{equation}
\label{harnack_helppo}
\sup_{B_R}v \leq C(\inf_{B_R}v + \sqrt{R})
\end{equation}
for a uniform constant  $C$. The H\"older continuity of $u$ then follows from \eqref{harnack_helppo} by  a standard iteration argument (\cite[Chapter 8.9]{GT}). 

To show \eqref{harnack_helppo} we denote $m = \inf_{B_R}v$ and $M= \sup_{B_R}v$.  By Proposition \ref{prop_hitaasti} there is $\hat{R}$ such that for every $R \leq \hat{R}$ it holds
\[
 \sqrt{R}\,  \eta \left( \frac{M_0}{R} \right)  \leq \frac{1}{M_0}.
\]
Therefore for  every $R\leq \hat{R}$ and  $t \leq M_0$ we have 
\[
R^2\phi(t/R) \leq R^2\phi(M_0/R) = M_0  R  \, \eta \left( \frac{M_0}{R} \right)  \leq \sqrt{R}.
\]
Since $M \leq M_0$ we may simply estimate 
\[
  \int_{m}^{M} \frac{dt}{R^2\phi(\frac{t}{R})+ t} \geq \int_{m}^{M} \frac{dt}{\sqrt{R}+ t} =\log\left( \frac{M + \sqrt{R}}{m+ \sqrt{R}}\right).
\]
From Theorem \ref{weakHarnack} we deduce
\[
M \leq C(m+ \sqrt{R}).
\]
Hence we have  \eqref{harnack_helppo}.
\end{proof}

\section{On $p(x)$-harmonic functions}

In this section we  discuss how Theorem \ref{weakHarnack} implies   Corollary \ref{p(x)-harnack}. Moreover, we will see that this inequality is optimal. Let us recall that  a function $u \in W_{loc}^{1,1}(\Omega)$ is  $p(x)$-harmonic in $\Omega$  if it is local minimizers of the  energy
\[
\int_{\Omega} \frac{1}{p(x)} |Du|^{p(x)}\, dx,
\]
where $1 < p(x)< \infty$.  We assume  that $p \in C^1(\R^n)$ and that there are numbers $1 <p^- \leq p^+ < \infty$ such that $p^- \leq p(x) \leq p^+$ for every $x \in \R^n$.   For more about  $p(x)$-harmonic functions see \cite{AM} and the references therein.

It follows from \cite{AM} that under these conditions $p(x)$-harmonic functions are locally $C^{1,\alpha}$-regular.  In \cite{JLP} it was shown that $p(x)$-harmonic functions are viscosity solutions of the Euler-Lagrange equation in nondivergence form. In order to formulate this result more precisely we define the following operator
\[
\Delta_{p(x)}\vphi(x):=  \Delta \vphi(x) + (p(x) -2) \Delta_{\infty}\vphi(x) +  \log|D\vphi(x)| \langle Dp(x), D\vphi(x) \rangle , 
\]
where  $\Delta_{\infty}\vphi= \langle D^2\vphi \frac{D\vphi}{|D\vphi|}, \frac{D\vphi}{|D\vphi|} \rangle$ denotes the infinity Laplace operator. This operator is well defined whenever $D\vphi(x) \neq 0$. The following result is from \cite{JLP}. 
\begin{proposition}
\label{px_visco}
Let $u \in C(\Omega) \cap  W_{loc}^{1,1}(\Omega)$ be $p(x)$-harmonic in $\Omega$. If $\vphi \in C^2(\Omega)$ is  such that $\vphi(x_0)= u(x_0)$ at $x_0 \in \Omega$, $D\vphi(x_0) \neq 0$ and $\vphi \leq u$  then it holds
\[
- \Delta_{p(x)}\vphi(x_0) \geq  0,
\]
and if $\vphi \geq u$ then 
\[
- \Delta_{p(x)}\vphi(x_0) \leq 0.
\]
\end{proposition}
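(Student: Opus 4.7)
The plan is to argue by contradiction after reducing the viscosity inequality to a divergence‑form inequality. A direct computation shows that whenever $D\vphi(x)\neq 0$,
\[
\diver\!\bigl(|D\vphi(x)|^{p(x)-2}D\vphi(x)\bigr)=|D\vphi(x)|^{p(x)-2}\,\Delta_{p(x)}\vphi(x),
\]
so $-\Delta_{p(x)}\vphi(x_0)\geq 0$ is equivalent to $-\diver(|D\vphi|^{p(x)-2}D\vphi)(x_0)\geq 0$. I will focus on the supersolution case $\vphi\le u$; the subsolution case is identical by symmetry.

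Suppose for contradiction that $-\Delta_{p(x)}\vphi(x_0)<0$. Since $D\vphi(x_0)\neq 0$, the coefficient $|D\vphi|^{p(x)-2}$ is smooth and bounded away from zero near $x_0$, so by continuity there is a ball $B_r(x_0)$ on which the strict inequality $\diver(|D\vphi|^{p(x)-2}D\vphi)>0$ holds. To upgrade the one‑sided touching to a strict one I would replace $\vphi$ by $\tilde\vphi(x):=\vphi(x)-\delta|x-x_0|^4$ with $\delta>0$ small: this leaves $D\vphi(x_0)$ and $D^2\vphi(x_0)$ unchanged, so after possibly shrinking $r$ the same strict inequality persists for $\tilde\vphi$, while now $m:=\min_{\partial B_r(x_0)}(u-\tilde\vphi)>0$. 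The natural test function in the weak formulation \eqref{px_laplace} is then
\[
w(x):=\bigl(\tilde\vphi(x)+m/2-u(x)\bigr)_+,
\]
which is nonnegative, not identically zero, and compactly supported in $B_r(x_0)$, hence admissible (it lies in $W_0^{1,p(\cdot)}(B_r(x_0))$ thanks to the local Lipschitz regularity of weak $p(x)$‑harmonic functions recalled from \cite{AM}).

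Testing \eqref{px_laplace} with $w$ yields
\[
\int_{\{w>0\}}|Du|^{p(x)-2}\langle Du,\,D\tilde\vphi-Du\rangle\,dx=0,
\]
while integrating $w\cdot\diver(|D\tilde\vphi|^{p(x)-2}D\tilde\vphi)$ by parts together with the strict sign produced in the previous paragraph gives
\[
\int_{\{w>0\}}|D\tilde\vphi|^{p(x)-2}\langle D\tilde\vphi,\,D\tilde\vphi-Du\rangle\,dx<0.
\]
Subtracting the two identities produces
\[
\int_{\{w>0\}}\bigl\langle|Du|^{p(x)-2}Du-|D\tilde\vphi|^{p(x)-2}D\tilde\vphi,\;Du-D\tilde\vphi\bigr\rangle\,dx<0,
\]
which contradicts the pointwise monotonicity of the vector field $\xi\mapsto|\xi|^{p(x)-2}\xi$ for $p(x)>1$.

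The step I expect to require the most care is the nondegeneracy check: the monotonicity integrand is nonnegative, and the only way to destroy the contradiction is to have $Du=D\tilde\vphi$ a.e.\ on $\{w>0\}$. This possibility is ruled out because $w$ is continuous, vanishes on $\partial\{w>0\}$, and is strictly positive somewhere in the interior, so $Dw=D\tilde\vphi-Du$ cannot be zero almost everywhere on each connected component. A secondary technical point is the justification that $w\in W_0^{1,p(\cdot)}(B_r(x_0))$ and that one may integrate by parts against $\diver(|D\tilde\vphi|^{p(x)-2}D\tilde\vphi)$, but both are routine from the compact support of $w$ and the $C^{1,\alpha}$‑bound on $Du$ in $B_r(x_0)$.
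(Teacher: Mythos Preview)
The paper does not supply its own proof of this proposition; it is simply quoted from \cite{JLP}, so there is nothing in the paper to compare against at the level of argument. Your proof is correct and is essentially the standard comparison/monotonicity argument used in \cite{JLP} (and, for constant $p$, in Juutinen--Lindqvist--Manfredi): assume the viscosity inequality fails, pass to the divergence form via the identity $\diver(|D\vphi|^{p(x)-2}D\vphi)=|D\vphi|^{p(x)-2}\Delta_{p(x)}\vphi$, perturb to make the touching strict, and test the weak formulation with $(\tilde\vphi+m/2-u)_+$ to contradict the monotonicity of $\xi\mapsto |\xi|^{p(x)-2}\xi$.

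Two minor comments. First, your ``nondegeneracy check'' paragraph is unnecessary: the strict inequality in the displayed contradiction comes directly from $\int w\,\diver(|D\tilde\vphi|^{p(x)-2}D\tilde\vphi)\,dx>0$, which holds because $w$ is continuous, $w(x_0)=m/2>0$, and the divergence is strictly positive on $B_r(x_0)$; you never need to exclude the case $Du=D\tilde\vphi$ a.e. Second, the positivity of $m=\min_{\partial B_r(x_0)}(u-\tilde\vphi)$ follows immediately from $u-\tilde\vphi\ge \delta r^4$ on $\partial B_r(x_0)$, so the perturbation $-\delta|x-x_0|^4$ already produces strict separation without any further hypothesis on how $\vphi$ touches $u$.
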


Corollary  \ref{p(x)-harnack} follows immediately from Theorem \ref{weakHarnack} once we  show that  $p(x)$-harmonic functions are viscosity supersolutions of \eqref{model1} and subsolutions of \eqref{model2} for $\phi(t)= C(|\log t|+1)t$ for some $C$. This is the assertion of the next lemma.
\begin{lemma}
\label{px_lemma}
Let $u \in C(\Omega)  \cap W_{loc}^{1,1}(\Omega)$ be $p(x)$-harmonic in $\Omega$ and let  $\phi(t)= C(|\log t|+1)t$  where  $C = ||p||_{C^1(\Omega)} < \infty$. If $\vphi \in C^2(\Omega)$ is such that $\vphi \leq u$ and $\vphi(x_0)= u(x_0)$ at $x_0 \in \Omega$  then it holds
\[
\Pu_{\lambda, \Lambda}^+(D^2 \vphi(x_0))  \geq -\phi(|D\vphi(x_0)|),  
\]
i.e., it is a viscosity supersolution of  \eqref{model1}, and if $\vphi \geq u$  then it holds
\[
\Pu_{\lambda, \Lambda}^-(D^2  \vphi(x_0))  \leq \phi(|D \vphi(x_0)|), 
\]
i.e., it is a viscosity subsolution of  \eqref{model2}. Here $\lambda = \min\{ 1, p^- -1\}$ and $\Lambda = \max \{ 1, p^+ -1 \}$.
\end{lemma}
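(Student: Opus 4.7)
The plan is to deduce the Pucci inequality from Proposition \ref{px_visco} by a direct matrix computation, treating the case $D\vphi(x_0)\neq 0$ first and then dealing with the degenerate case $D\vphi(x_0)=0$ by a standard perturbation argument. I will only write out the supersolution case; the subsolution case is analogous.

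Assume first $D\vphi(x_0)\neq 0$ and set $\nu=D\vphi(x_0)/|D\vphi(x_0)|$. Writing $\Delta\vphi(x_0)+(p(x_0)-2)\Delta_{\infty}\vphi(x_0)=\mathrm{Tr}(A\,D^2\vphi(x_0))$ with
\[
A \;=\; I+(p(x_0)-2)\,\nu\otimes\nu,
\]
the matrix $A$ is symmetric, has eigenvalues $1$ (with multiplicity $n-1$) and $p(x_0)-1$, and therefore satisfies $\lambda I\leq A\leq \Lambda I$ for $\lambda=\min\{1,p^--1\}$ and $\Lambda=\max\{1,p^+-1\}$. Proposition \ref{px_visco} then becomes
\[
-\mathrm{Tr}(A\,D^2\vphi(x_0))\;\geq\;\log|D\vphi(x_0)|\,\langle Dp(x_0),D\vphi(x_0)\rangle.
\]

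Next I invoke the standard comparison between linear operators and Pucci operators: decomposing $D^2\vphi(x_0)=X^+-X^-$ into its positive and negative parts, the bound $\lambda I\leq A\leq\Lambda I$ gives $\mathrm{Tr}(AX^+)\geq\lambda\,\mathrm{Tr}(X^+)$ and $\mathrm{Tr}(AX^-)\leq\Lambda\,\mathrm{Tr}(X^-)$, hence
\[
-\mathrm{Tr}(A\,D^2\vphi(x_0))\;\leq\;-\lambda\,\mathrm{Tr}(X^+)+\Lambda\,\mathrm{Tr}(X^-)\;=\;\Pu^+_{\lambda,\Lambda}(D^2\vphi(x_0)).
\]
Combining the two displayed inequalities yields
\[
\Pu^+_{\lambda,\Lambda}(D^2\vphi(x_0))\;\geq\;\log|D\vphi(x_0)|\,\langle Dp(x_0),D\vphi(x_0)\rangle\;\geq\;-|Dp(x_0)|\,|D\vphi(x_0)|\,\bigl|\log|D\vphi(x_0)|\bigr|.
\]
Since $|Dp(x_0)|\leq\|p\|_{C^1(\Omega)}=C$ and $\phi(t)=C(|\log t|+1)t\geq C|\log t|\,t$, the right-hand side is bounded below by $-\phi(|D\vphi(x_0)|)$, which is exactly the claim.

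The remaining issue, which I expect to be the only real obstacle, is the degenerate case $D\vphi(x_0)=0$. Here $\phi(|D\vphi(x_0)|)=0$, so the statement to prove is $\Pu^+_{\lambda,\Lambda}(D^2\vphi(x_0))\geq 0$, and Proposition \ref{px_visco} as stated is not directly applicable. The standard way around this is to perturb: for small $\eps>0$ and any unit vector $e$, consider
\[
\vphi_\eps(x)\;:=\;\vphi(x)+\eps\,\langle e,x-x_0\rangle-\eps|x-x_0|^2.
\]
The function $u-\vphi_\eps$ attains a local minimum at some $x_\eps$ with $x_\eps\to x_0$, and the test function $\vphi_\eps$ has gradient $D\vphi(x_\eps)+\eps e-2\eps(x_\eps-x_0)$, which is nonzero for $\eps>0$ small. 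Applying the nondegenerate argument at $x_\eps$ to $\vphi_\eps$, and then sending $\eps\to 0$ using continuity of $D^2\vphi$ and the fact that $\phi(t)\to 0$ as $t\to 0$, recovers the required inequality at $x_0$. The subsolution statement is proved by the symmetric choice of perturbation and the analogue $-\mathrm{Tr}(AX)\geq\Pu^-_{\lambda,\Lambda}(X)$.
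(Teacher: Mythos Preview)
Your treatment of the nondegenerate case $D\vphi(x_0)\neq 0$ is correct and in fact spells out the ``calculations'' that the paper leaves implicit: writing the second-order part as $\mathrm{Tr}(A\,D^2\vphi)$ with $\lambda I\le A\le\Lambda I$ and comparing with the Pucci operator is exactly the right computation.

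The gap is in the degenerate case. Your assertion that $D\vphi_\eps(x_\eps)=D\vphi(x_\eps)+\eps e-2\eps(x_\eps-x_0)$ is nonzero for small $\eps$ is not justified, and it can fail. Take $u\equiv 0$ (certainly $p(x)$-harmonic) and $\vphi(x)=-|x-x_0|^2$, which touches from below at $x_0$ with $D\vphi(x_0)=0$. Then $u-\vphi_\eps=(1+\eps)|x-x_0|^2-\eps\langle e,x-x_0\rangle$ has its minimum at $x_\eps=x_0+\tfrac{\eps}{2(1+\eps)}e$, and a direct check gives $D\vphi_\eps(x_\eps)=0$ for \emph{every} $\eps>0$ and every choice of $e$. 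More generally, whenever $u$ is $C^1$ the contact condition forces $D\vphi_\eps(x_\eps)=Du(x_\eps)$, so if $Du$ vanishes near $x_0$ your perturbation can never produce a nonzero gradient. Since Proposition~\ref{px_visco} gives no information at such points, your limiting argument stalls precisely in this scenario.

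The paper's proof confronts this obstruction head-on. It perturbs by translations $\vphi_y(x)=\vphi(x-y)$ for all small $y$ and argues by dichotomy: either some translate touches $u$ at a point with nonzero gradient, and one passes to the limit as you do; or every translate touches with zero gradient, in which case one deduces that $u$ can be touched from below at \emph{every} nearby point by a paraboloid $-|A|\,|x-y|^2+u(y)$, hence $u$ is semiconvex with $Du=0$ a.e.\ and therefore constant, making the desired inequality trivial. To repair your argument you need an analogous dichotomy rather than the bare claim that the perturbed gradient is nonzero.
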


\begin{proof}
We only prove that $u$ is  a viscosity supersolution of  \eqref{model1}, for the subsolution property follows  similarly. Let $\vphi \in C^2(\Omega)$ be  such that $\vphi(x_0)= u(x_0)$ at $x_0 \in \Omega$ and $\vphi \leq u$ in a neighborhood of $x_0$. Without loss of generaly we may assume that $x_0 = 0$, $u(0)=0$,  $\vphi(x) = \langle Ax, x \rangle + \langle b, x \rangle$ for a  symmetric matrix  $A$ and a vector $b$, and that $\vphi(x) < u(x)$ for $x \neq 0$  in $B_\rho$ for some small $\rho >0$. The goal is to show that 
\[
\Pu_{\lambda, \Lambda}^+(D^2 \vphi(0))  +\phi(|D\vphi(0)|) \geq 0.
\]
Note that if $D\vphi(0) \neq 0$ then the claim follows from Proposition \ref{px_visco} since
\[
\Pu_{\lambda, \Lambda}^+(D^2 \vphi(0)) +   \phi(|D\vphi(0)|) \geq - \Delta_{p(x)}\vphi(0) \geq  0.
\]
Therefore we need to treat the case $D\vphi(0) =b = 0$ to conclude the proof. 

Let $r>0$ be small. For $y \in B_r$ we define 
\[
\vphi_y(x)= \vphi(x-y).
\] 
For every  $y$ there is a number $c_y $  such that the function $\vphi_{y} +c_y$ touches $u$ from below, say at a point   $x_y$. It is clear that $x_y \to 0$ as $|y| \to 0$. If there exists a sequence $(y_k)$ such that $|y_k| \to 0$ and at  the associated contact points $(x_k)$ it holds $D \vphi(x_k)\neq 0$, Proposition \ref{px_visco} implies 
\[
\Pu_{\lambda, \Lambda}^+(D^2 \vphi(x_k)) +   \phi(|D\vphi(x_k)|) \geq - \Delta_{p(x)}\vphi(x_k) \geq  0.
\] 
The claim then follows  by letting $k \to \infty$. Hence,  we need to  treat the case when there exists $r>0$ such that for every $y \in B_r$  at every  associated contact point $x_y$ it holds $D\vphi_y(x_y) = 0$.

Since $\vphi_y(x) = \vphi(x-y) = \langle A(x-y), (x-y) \rangle + c_y$ and $D\vphi_y(x_y) = 0$ we have that $x_y = y$ for every $y \in B_r$. This means that at every point $y \in B_r$ we may touch the graph of $u$ from below with a paraboloid 
\[
P(x)= - |A| |x-y|^2 + u(y). 
\]
This implies that $u$ is semi-convex in $B_r$. In particular, $u$ is locally Lipschitz continuous in  $B_r$ and its gradient  vanishes almost everywhere.  Hence, $u$ is constant in $B_r$ and the claim is trivially true. 
\end{proof}

 We conclude this section by constructing a naive example which  verifies that  Corollary  \ref{p(x)-harnack} is  indeed sharp. To that aim let us denote the interval $I_r(k)= (k-r,k+r)$. We consider the function $u : (0, \infty)\to \R$,
\[
u(x)  = e^{-e^{x}}. 
\] 
It is easy to see that if $p_k \in C^1(I_2(k))$ solves the equation
\begin{equation} \label{tyhma_ody}
p'_k(x) +\left( \frac{e^x-1}{e^x-x}\right)(p_k(x)-1) =0 \quad x \in I_2(k)
\end{equation}
then $u$ is   $p_k(x)$-harmonic in $I_2(k)$. We may solve \eqref{tyhma_ody} such that  $p_k$ satisfies $1+ C^{-1} \leq p_k \leq C$ and $||p_k||_{C^1(I_2(k))}\leq C$ for a constant $C$ which is independent of $k$.

Note that the function $u$ does  not satisfy the classical Harnack's inequality, since
\[
\frac{\sup_{I_1(k)}u}{\inf_{ I_1(k)}u} =  \frac{e^{-e^{k-1}}}{e^{-e^{k+1}}} = e^{e^{k}( e- e^{-1})} \to \infty \qquad \text{as}\,\, k \to \infty.
\]
On the other hand Corollary  \ref{p(x)-harnack} implies 
\[
\sup_{I_1(k)}u^C \leq C \inf_{I_1(k)}u
\]
for a constant $C>1$ which is the optimal estimate.

\appendix

\section{Proof of the Lemmata of Section 4}
\label{appendix}

\begin{proof}[Proof of  Lemma \ref{barrior}] 
The construction of the barrier function is standard \cite{CC}. For every $x \in B_{2r_0} \setminus  B_{\frac{r_0}{2}}$ the function is defined as
\[
\vphi(x)= M_1 - M_2|x|^{- \alpha}
\]
where $\alpha =\max \{\frac{(n-1)\Lambda}{\lambda},1\}$ and $M_1, M_2$ are such that $\vphi(x) = 0$  when $|x| = 2r_0$ and  $\vphi(x) = -2$  when $|x| = r_0$. In other words
\[
M_2 =  \frac{2 r_0^{\alpha}}{1 - 2^{- \alpha}}.
\] 
Note that while $\alpha$ is already fixed, the radius  $r_0$ is still to be chosen. If we can  show that there is  $r_0$ such that 
\begin{equation} \label{este_yhtalo}
\Pu_{\lambda, \Lambda}^-(D^2 \vphi(x)) \geq \phi(|D\vphi(x)|)  \qquad x \in B_{2r_0} \setminus  B_{\frac{r_0}{2}}
\end{equation}
we may extend $\phi$ smoothly to the whole ball  $B_{2r_0}$ so that it will satisfy all the required conditions.

Let us find $r_0$ which satisfies  \eqref{este_yhtalo}. For $\frac{r_0}{2} \leq |x| \leq 2 r_0$, we have
\[
D \vphi(x) = \alpha M_2|x|^{- \alpha-2}x \quad \text{and} \quad D^2 \vphi(x) = \alpha M_2|x|^{- \alpha-2}\left(I - (\alpha +2) \frac{x}{|x|} \otimes \frac{x}{|x|} \right).
\]
Therefore it holds
\[
\begin{split}
\Pu_{\lambda, \Lambda}^-(D^2 \vphi) &\geq \alpha M_2|x|^{- \alpha-2} (\lambda(\alpha +1) - \Lambda (n-1))\\
&\geq   \alpha  \lambda  M_2 |x|^{- \alpha-2}\\
&\geq   \frac{\alpha \lambda}{2(2^{\alpha} -1)}r_0^{-2}
\end{split}
\]
by the choices of $\alpha$ and $M_2$. For $\frac{r_0}{2} \leq |x| \leq 2 r_0$ the monotonicity of $\phi$   yields 
\[
\phi(|D \vphi(x)|  ) = \phi \left(\frac{ 2 \alpha  r_0^{\alpha}}{1 - 2^{- \alpha}}|x|^{-\alpha-1}  \right) \leq  \phi \left( 2^{\alpha +3} \alpha r_0^{-1} \right).
\]
Therefore in order to show \eqref{este_yhtalo} we only need to find $r_0$ which satisfies 
\begin{equation} \label{este_lemma}
\frac{ \alpha \lambda}{2(2^{\alpha} -1)} r_0^{-2} \geq \phi \left( 2^{\alpha +3} \alpha r_0^{-1} \right).
\end{equation}

Writing $\phi(t)= \eta(t)t$  \eqref{este_lemma} reads as
\[
\frac{\lambda}{2^{\alpha +4}(2^{\alpha} -1)}r_0^{-1} \geq \eta \left( 2^{\alpha +3} \alpha r_0^{-1} \right).
\]
By Proposition  \ref{prop_hitaasti} (ii) we have  
\[
\lim_{t \to \infty} \frac{\eta(t)}{t} =0
\]
and therefore \eqref{este_lemma} holds for  $r_0$ small enough. 
\end{proof}

\begin{proof}[Proof of  Lemma \ref{decay_est}]
By approximating $u$ with infimal convolution
\[
u_{\eps}(x) = \inf_{y \in B_2}\left( u(y) + \frac{1}{\eps}|x-y|^2\right) 
\]
we may assume that $u$ is semiconcave.

Let $\hat{x} \in \bar{B}_{1}$ be a point where $u(\hat{x})\leq 1$. Let $r_0 \leq 1$ be as in Lemma \ref{barrior} and choose $x_0 \in B_1$ such that $\hat{x} \in \bar{B}_{r_0}(x_0) \subset \bar{B}_1$. Let  $\vphi$ be the barrier function from Lemma \ref{barrior} and define $v:B_{2r_0} \to  \R$ as
\[
v(x) = u(x) + \vphi(x-x_0). 
\]
By Lemma \ref{barrior} (ii) we have  $\inf_{B_{r_0}(x_0)} v \leq v(\hat{x} ) \leq -1$. 

Since $u$ is nonnegative   and  $\vphi(x-x_0)\geq 0$ for $x \in \partial  B_{2r_0}(x_0)$   (Lemma \ref{barrior} (i)), we have $v \geq 0 $ on $\partial  B_{2r_0}(x_0)$.  Moreover, by the monotonicity of $\phi$, by elementary properties of the Pucci-operators and by  Lemma  \ref{barrior} (iii) we obtain that $v$ is a viscosity supersolution of
\begin{equation} \label{from_pucci}
\Pu^+(D^2v (x)) \geq - \phi(|Dv(x)| + |D \vphi(x-x_0)|) + \phi(|D \vphi(x-x_0)|) - C\xi(x-x_0),
\end{equation}
in $ B_{2r_0}(x_0)$. Here $\xi$ is a continuous function  such that $0 \leq \xi \leq 1$ and $\text{supp} \,\xi  \subset B_{\frac{r_0}{2}}$.

Let us extend $v$  by $0$  outside   $B_{2r_0}(x_0) $ and denote the convex envelope of $-v^-= \min\{v, 0\}$ in $B_{3r_0}(x_0) $  by $\Gamma_v$, i.e., 
\[
\Gamma_v(x):= \sup_{p \in \R^n} \inf_{y \in B_{3r_0}(x_0) } ( p \cdot (x-y)  - v^-(y) ).
\]
We  denote the contact set  by $\{ v = \Gamma_v \} := \{ x \in B_{3r_0}(x_0) : -v^-(x) = \Gamma_v(x)\}$.  Since $v$ is semiconcave we have $\Gamma_v \in C^{1,1}(\{ v = \Gamma_v \})$, see \cite[Theorem 5.1]{CC}. In particular, $\Gamma_v $ is twice differentiable almost everywhere on $\{ v = \Gamma_v \}$. Since $v$ is a viscosity supersolution of \eqref{from_pucci} and $\Gamma_v \leq v$ in $B_{2r_0}(x_0) $ we have
\begin{equation} \label{from_pucci2}
\Pu^+(D^2 \Gamma_v (x)) \geq - \phi(|D  \Gamma_v(x)| + |D \vphi(x-x_0)|) + \phi(|D \vphi(x-x_0)|) - C\xi(x-x_0)
\end{equation}
 a.e. $x \in  \{ v = \Gamma_v \}$.

We denote by $E$ the subset of $B_{2 r_0}(x_0) \cap \{ v = \Gamma_v \}$ where the gradient of $\Gamma_v$ is less than one 
\begin{equation} \label{set_E}
E :=  \{ v = \Gamma_v \}\cap \{ x \in B_{2 r_0}(x_0)  : |D \Gamma_v(x)| \leq 1\}.
\end{equation}
 I claim that it holds
\begin{equation} \label{eq_for_v}
\Pu^+(D^2 \Gamma_v(x)) \geq - b|D\Gamma_v(x)|  - C_1\chi_{B_{r_0}}(x)
\end{equation}
a.e. on $E$, for some universal constants  $C_1, b$, where $\chi_{B_{r_0}}$ is the characteristic function of $B_{r_0}$.

Indeed, denote $\tilde{L}:= \sup_{x \in  B_{2r_0} } |D \vphi(x)|$. By  Lemma \ref{barrior} it holds  $ |D \vphi(x)| \geq L_1^{-1}$ for $x \in  B_{2r_0} \setminus B_{\frac{r_0}{2}}$. Since $|D\Gamma_v(x)| \leq 1$  for $x \in E$, we have by the local Lipschitz continuity of $\phi$ that 
\begin{equation} \label{lokaali_lip}
 \phi(|D\Gamma_v(x)| + |D \vphi(x-x_0)|) - \phi(|D \vphi(x-x_0)|) \leq b \, |D \Gamma_v(x)| 
\end{equation}
a.e. on $E \setminus B_{\frac{r_0}{2}}(x_0)$, where 
\[
b = \max \{|D\phi(p)| \, : \,  L_1^{-1} \leq |p| \leq \tilde{L} +1\}.
\] 
Since $\text{supp} \, \xi  \subset B_{\frac{r_0}{2}}$ we conclude from \eqref{from_pucci2} and \eqref{lokaali_lip} that
\[
\Pu^+(D^2 \Gamma_v(x)) \geq - b|D  \Gamma_v(x)| 
\]
a.e. on $E \setminus B_{\frac{r_0}{2}}(x_0)$. On the other hand, we may  trivially estimate from \eqref{from_pucci2} that 
\[
\begin{split}
\Pu^+(D^2  \Gamma_v (x)) &\geq - \phi(|D  \Gamma_v(x)| + |D \vphi(x-x_0)|) + \phi(|D \vphi(x-x_0)|) - C\xi(x-x_0) \\
&\geq -\phi(1+ \tilde{L}) -C
\end{split}
\]
a.e. on $E  \cap B_{\frac{r_0}{2}}(x_0)$. Hence we have \eqref{eq_for_v}.

Next we use \eqref{eq_for_v} to deduce  
\[
0 \leq \det(D^2 \Gamma_v(x)) \leq C_2( |D \Gamma_v(x)|^n + \chi_{B_{r_0}}(x))
\] 
a.e. $x \in E$ for some universal  constant $C_2$.  The previous inequality and  the coarea formula yield
\begin{equation} \label{abp-estimate}
\begin{split}
\int_{D \Gamma_v(E)} \frac{dp}{|p|^n + \delta} &\leq \int_{E} \frac{\det(D^2 \Gamma_v)}{|D \Gamma_v|^n + \delta}\, dx \\
&\leq C_2 \int_{E} \frac{|D \Gamma_v|^n + \chi_{B_{r_0}}}{|D \Gamma_v|^n + \delta}\, dx \\
&\leq C_2\,|B_{2r_0}|  + \frac{C_2}{\delta} | B_{r_0}(x_0) \cap \{ v = \Gamma_v \}|,
\end{split}
\end{equation}
where $\delta>0$ is a small number which will be chosen later. 

Let us recall that $v = 0$ in $B_{3r_0}(x_0) \setminus B_{2r_0}(x_0) $ and $\inf_{B_{r_0}(x_0)}v \leq -1$.  I claim that it holds 
\begin{equation} \label{abp-estimate2}
B_{1/4} \subset  D \Gamma_v(E),
\end{equation}
where the set $E$ is defined in \eqref{set_E}. Indeed, let us choose $p \in B_{1/4}$. The function $w(x)= -v^-(x) -p \cdot (x-x_0) +3r_0|p|$ is nonnegative on $\partial B_{3r_0}(x_0)$ and $\inf_{B_{r_0}(x_0)} w  <0$.  Therefore $w$ attains its  minimum in $B_{3r_0}(x_0)$, say at $\tilde{x}$. In particular, $\tilde{x}$ belongs to the contact set $\{\Gamma_v =v\}$ and $p=D \Gamma_v(\tilde{x})$. Note that  $v(\tilde{x})<0$ and therefore $\tilde{x} \in B_{2r_0}(x_0) $, since $v = 0$ in $B_{3r_0}(x_0) \setminus B_{2r_0}(x_0) $. Moreover we have $|D \Gamma_v(\tilde{x})| = |p| < 1/4$. Hence,   $\tilde{x} \in E$ which proves  \eqref{abp-estimate2}.

The estimate \eqref{abp-estimate2} yields 
\[
\int_{D \Gamma_v(E)} \frac{dp}{|p|^n + \delta} \geq \int_{B_{1/4}} \frac{dp}{|p|^n + \delta} = \omega_n \int_{0}^{1/4}\frac{\rho^{n-1}}{\rho^n + \delta}. 
\]
Since the value of the integral diverges as $\delta \to 0$, we may choose $\delta$ such that 
\[
\omega_n \int_{0}^{1/4}\frac{\rho^{n-1}}{\rho^n + \delta} \geq  C_2\,|B_{2r_0}| +1.
\]
The estimate \eqref{abp-estimate} then implies
\[
|  B_{r_0}(x_0) \cap \{ v = \Gamma_v \}| > \mu. 
\]
for some $\mu>0$.

To conclude the proof we notice that  if $x$ belongs to the  contact set $ \{v = \Gamma_v\}$, then it  holds  $v(x) \leq 0$. Therefore $u(x) \leq- \vphi(x- x_0) \leq L_1$ for every  $x \in B_{r_0} \cap \{ v = \Gamma_v \} $ and the previous inequality yields
\[
|\{ u(x) \leq L_1 :  x \in B_{r_0}(x_0) \}| > \mu.
\]
The claim follows from $ B_{r_0}(x_0) \subset B_1$.
\end{proof}

The proof of Lemma \ref{caffarelli}  is again standard  \cite{CC} except that we need to be careful when we rescaling the solution. Recall that  there is no bound for the index $k_1$ and no assumption of the infimum $m = \inf_{B_1}u$.   
\begin{proof}[Proof of Lemma \ref{caffarelli}]
Let $\mu$ be the constant from Lemma \ref{decay_est}. Let us choose $\sigma^n > \frac{c_{\eps}}{ \omega_n \mu}$, where $c_{\eps}$ is the constant from \eqref{epsilon_muoto} and $\omega_n$ is the volume of the unit ball. 

We argue  by contradiction and assume  that $\sup_{B_{2r_l}(x_0)} u \leq  \nu^{l+1}L_0 L^{k_1}m$. Recall that $r_l  = \sigma \nu^{- (l+1)\eps /n} (L_0/2)^{- \eps /n} a_{k_1}$ and $a_{k_1}= \frac{1}{L} \frac{1}{R \eta(L^{k_1}m) +1}$.  By choosing $L_0$ large enough we may assume that $r_l  \leq \frac{1}{6}$. The estimate  \eqref{epsilon_muoto} from  Lemma \ref{lemma_huippu2} gives
\begin{equation} 
\label{decay_ylh}
\begin{split}
|\{ x \in B_{r_l}(x_0) :  u(x) \geq   \nu^{l+1} \frac{L_0}{2} L^{k_1}m \}| &\leq  |\{ x \in B_{\frac{5}{3}} :  u(x) \geq  \nu^{l+1} \frac{L_0}{2} L^{k_1}m \}|\\
 &\leq  c_{\eps}  \nu^{-(l+1)\eps} \left(\frac{L_0}{2}\right)^{-\eps} a_{k_1}^n.
\end{split}
\end{equation}

 We  define a nonnegative function $v: B_2 \to \R$ by
\[
v(x):= \frac{\nu}{(\nu-1)} - \frac{u(r_lx +x_0)}{(\nu-1)\nu^{l}L_0 L^{k_1}m }.
\]
Denote $A= (\nu-1) \nu^{l}L_0 L^{k_1}m$. I claim that it holds
\[
r_l \leq \frac{A}{R \phi(A) + A} = \frac{1}{R\eta( (\nu-1) \nu^{l}L_0 L^{k_1}m)+ 1}.
\]
Indeed, this is  equivalent to 
\begin{equation} 
\label{alh_skaalaus}
R\eta( (\nu-1) \nu^{l}L_0 L^{k_1}m) +1 \leq \sigma^{-1}\nu^{ (l+1)\eps /n} (L_0/2)^{ \eps /n}  a_{k_1}^{-1}.
\end{equation}
Since  $a_{k_1}^{-1}  \geq R\eta( L^{k_1}m) +1$ we have  by the condition (P3), by $\eta\geq 1$ and by the choice of $\nu$ that
\[
\begin{split}
R\eta( (\nu-1) \nu^{l}L_0 L^{k_1}m)+ 1 &\leq \Lambda_0 \eta( (\nu-1) \nu^{l}L_0)(R\eta( L^{k_1}m)+1) \\
&\leq C \eta( \nu^{l}) a_{k_1}^{-1}.
\end{split}
\]
By Proposition \ref{prop_hitaasti} (ii) it holds $\lim_{t \to \infty}\frac{\eta(t)}{t^{\eps/n}}= 0$ and therefore   we have 
\[
\sigma^{-1}\geq \frac{ C }{ (L_0/2)^{ \eps /n}} \left(\sup_{l\geq 1}\frac{ \eta(  \nu^{l}) }{\nu^{ (l+1)\eps /n}} \right)
\]
when $L_0$ is chosen large enough. This proves \eqref{alh_skaalaus}.

By Lemma \ref{skaalaus} we deduce that $v$ is a nonnegative supersolution of \eqref{model1} in $B_2$ and $v(0)=1$. Hence, Lemma \ref{decay_est} yields
\begin{equation} 
\label{decay_alh}
\begin{split}
|\{ x \in B_{r_l}(x_0) :  u(x) < \nu^{l+1} \frac{L_0}{2} L^{k_1}m \}| &= |B_{r_l}| |\{ x \in B_{1}(x_0) :  v (x) >  L_0 \}| \\
&\leq (1-\mu) |B_{r_l}|
\end{split}
\end{equation}
when $L_0 \geq L$. Combining \eqref{decay_ylh} and \eqref{decay_alh} we have
\[
\begin{split}
|B_{r_l}| &= |\{ x \in B_{r_l}(x_0) :  u(x) >  \nu^{l+1}  \frac{L_0}{2}  L^{k_1}m\}|  + |\{ x \in B_{r_l}(x_0) :  u(x) \leq   \nu^{l+1}  \frac{L_0}{2}  L^{k_1}m\}|  \\
&\leq   c _{\eps} \nu^{-(l+1)\eps} \left(\frac{L_0}{2}\right)^{-\eps}a_{k_1}^n+ (1-\mu) |B_{r_l}|. 
\end{split}
\]
In other words
\[
\mu  \omega_n r_l^n  \leq c _{\eps}    \nu^{-(l+1)\eps} \left(\frac{L_0}{2}\right)^{-\eps} a_{k_1}^n.  
\]
Since  $r_l  = \sigma \nu^{- (l+1)\eps /n} (L_0/2)^{- \eps /n} a_{k_1}$ this implies 
\[
 \sigma^n \leq \frac{c _{\eps} }{\omega_n \mu}
\]
which  contradicts the choice of $\sigma$.
\end{proof}

\section*{Acknowledgment}
 This  work was supported by the  Academy of Finland grant 268393.

\end{document}